\newtheorem{theorem}{Theorem}[section]
\newtheorem{lemma}[theorem]{Lemma}
\newtheorem{remark}[theorem]{Remark}
\newcommand{\NN}{\mathbb N}
\newcommand{\CC}{\mathbb C}
\newcommand{\RR}{\mathbb R}
\newcommand{\ZZ}{\mathbb Z}
\newcommand{\ds}{\displaystyle}
\newcommand{\EE}{\mathcal E}
\newcommand{\DD}{\mathcal D}
\newcommand{\SSS}{\mathcal S}
\newcommand{\beq}{\begin{eqnarray}}
\newcommand{\eeq}{\end{eqnarray}}
\newcommand{\beqs}{\begin{eqnarray*}}
\newcommand{\eeqs}{\end{eqnarray*}}
\begin{document}

\title{Laplace transform in spaces of ultradistributions}

\author{Bojan Prangoski}

\date{}
\maketitle

\begin{abstract}
The Laplace transform in Komatsu ultradistributions is considered. Also, conditions are given under which an analytic function is a Laplace transformation of an ultradistribution.
\end{abstract}

\noindent \textbf{Mathematics Subject Classification} 46F05; 46F12, 44A10\\
\textbf{Keywords} ultradistributions, Laplace transform

\section{Introduction}

The Laplace transform of distributions was defined and studied by Schwartz, \cite{SchwartzK}. Later, Carmichael and Pilipovi\'c in \cite{CP} (see also \cite{PilipovicK}), considered the Laplace transform in $\Sigma'_{\alpha}$ of Beurling-Gevrey tempered ultradistributions and obtained some results concerning the so-called tempered convolution. In particular, they gave a characterization of the space of Laplace transforms of elements from $\Sigma'_{\alpha}$ supported by an acute closed cone in $\RR^d$. Komatsu has given a great contribution to the investigations of the Laplace transform in ultradistribution and hyperfunction spaces considering them over appropriate domains, see \cite{kl} and references therein  (see also \cite{zh}). Michalik in \cite{Mic} and Lee and Kim in \cite{Kim} have adapted the space of ultradistribution and Fourier hyperfunctions to the definition of the Laplace transform, following ideas of Komatsu. Our approach is different. We develop the theory within the space of already constructed ultradistributions of Beurling and Roumieu type. The ideas in the proofs of the two main theorems (theorem \ref{t1} and theorem \ref{t2}) are similar to those in \cite{Vladimirov} in the case of Schwartz distributions. In these theorems are characterized ultradistributions defined on the whole $\RR^d$ through the estimates of their Laplace transforms. This is the main point of our investigations contrary to other authors who investigated generalized functions supported by cones. We consider a restricted class of ultradistributions assuming conditions $(M.1), (M.2)$ and $(M.3)$ (for example, cases $M_p=p!^s$, $s>1$) in order to obtain fine representations through the analysis of the corresponding class of subexponentially bounded entire functions. With weaker conditions, $(M.3)'$ instead of $(M.3),$ or even in the case of quasianalyticity, we can obtain different, technically more complicate, structural representations.

\section{Preliminaries}

The sets of natural, integer, positive integer, real and complex numbers are denoted by $\NN$, $\ZZ$, $\ZZ_+$, $\RR$, $\CC$. We use the symbols for $x\in \RR^d$: $\langle x\rangle =(1+|x|^2)^{1/2} $,
$D^{\alpha}= D_1^{\alpha_1}\ldots D_n^{\alpha_d},\quad D_j^
{\alpha_j}={i^{-1}}\partial^{\alpha_j}/{\partial x}^{\alpha_j}$, $\alpha=(\alpha_1,\alpha_2,\ldots,\alpha_d)\in\NN^d$. If $z\in\CC^d$, by $z^2$ we will denote $z^2_1+...+z^2_d$. Note that, if $x\in\RR^d$, $x^2=|x|^2$.\\
\indent Following \cite{Komatsu1}, we denote by  $M_{p}$    a  sequence  of  positive  numbers $M_0=1$ so that:\\
\indent $(M.1)$ $M_{p}^{2} \leq M_{p-1} M_{p+1}, \; \; p \in\ZZ_+$;\\
\indent $(M.2)$ $\ds M_{p} \leq c_0H^{p} \min_{0\leq q\leq p} \{M_{p-q} M_{q}\}$, $p,q\in \NN$, for some $c_0,H\geq1$;\\
\indent $(M.3)$  $\ds\sum^{\infty}_{p=q+1}   \frac{M_{p-1}}{M_{p}}\leq c_0q \frac{M_{q}}{M_{q+1}}$, $q\in \ZZ_+$,\\
although in some assertions we could assume the weaker ones $(M.2)'$ and $(M.3)'$ (see \cite{Komatsu1}). For a multi-index $\alpha\in\NN^d$, $M_{\alpha}$ will mean $M_{|\alpha|}$, $|\alpha|=\alpha_1+...+\alpha_d$. Recall,  $m_p=M_p/M_{p-1}$, $p\in\ZZ_+$ and the associated function for the sequence $M_{p}$ is defined by
\beqs
M(\rho)=\sup  _{p\in\NN}\log_+   \frac{\rho^{p}}{M_{p}} , \; \; \rho > 0.
\eeqs
It is non-negative, continuous, monotonically increasing function, which vanishes for sufficiently small $\rho>0$ and increases more rapidly then $(\ln \rho)^p$ when $\rho$ tends to infinity, for any $p\in\NN$.\\
\indent Let $U\subseteq\RR^d$ be an open set and $K\subset\subset U$ (we will use always this notation for a compact subset of an open set). Then $\EE^{\{M_p\},h}(K)$ is the space of all $\varphi\in \mathcal{C}^{\infty}(U)$ which satisfy $\ds\sup_{\alpha\in\NN^d}\sup_{x\in K}\frac{|D^{\alpha}\varphi(x)|}{h^{\alpha}M_{\alpha}}<\infty$ and $\DD^{\{M_p\},h}_K$ is the space of all $\varphi\in \mathcal{C}^{\infty}\left(\RR^d\right)$ with supports in $K$, which satisfy $\ds\sup_{\alpha\in\NN^d}\sup_{x\in K}\frac{|D^{\alpha}\varphi(x)|}{h^{\alpha}M_{\alpha}}<\infty$;
$$
\EE^{(M_p)}(U)=\lim_{\substack{\longleftarrow\\ K\subset\subset U}}\lim_{\substack{\longleftarrow\\ h\rightarrow 0}} \EE^{\{M_p\},h}(K),\,\,\,\,
\EE^{\{M_p\}}(U)=\lim_{\substack{\longleftarrow\\ K\subset\subset U}}
\lim_{\substack{\longrightarrow\\ h\rightarrow \infty}} \EE^{\{M_p\},h}(K),
$$
\beqs
\DD^{(M_p)}_K=\lim_{\substack{\longleftarrow\\ h\rightarrow 0}} \DD^{\{M_p\},h}_K,\,\,\,\, \DD^{(M_p)}(U)=\lim_{\substack{\longrightarrow\\ K\subset\subset U}}\DD^{(M_p)}_K,\\
\DD^{\{M_p\}}_K=\lim_{\substack{\longrightarrow\\ h\rightarrow \infty}} \DD^{\{M_p\},h}_K,\,\,\,\, \DD^{\{M_p\}}(U)=\lim_{\substack{\longrightarrow\\ K\subset\subset U}}\DD^{\{M_p\}}_K.
\eeqs
The spaces of ultradistributions and ultradistributions with compact support of Beurling and Roumieu type are defined as the strong duals of $\DD^{(M_p)}(U)$ and $\EE^{(M_p)}(U)$, resp. $\DD^{\{M_p\}}(U)$ and $\EE^{\{M_p\}}(U)$. For the properties of these spaces, we refer to \cite{Komatsu1}, \cite{Komatsu2} and \cite{Komatsu3}. In the future we will not emphasize the set $U$ when $U=\RR^d$. Also, the common notation for the symbols $(M_{p})$ and $\{M_{p}\} $ will be *.\\
\indent If $f\in L^{1} $, then its Fourier transform is defined by
$
(\mathcal{F}f)(\xi ) = \hat{f} (\xi) = \int_{{\RR^d}} e^{-ix\xi}f(x)dx, \; \; \xi \in {\RR^d}.
$

By $\mathfrak{R}$ is denoted a set of positive sequences which monotonically increases to infinity. For $(r_p)\in\mathfrak{R}$, consider the sequence $N_0=1$, $N_p=M_p\prod_{j=1}^{p}r_j$, $p\in\ZZ_+$. One easily sees that this sequence satisfies $(M.1)$ and $(M.3)'$ and its associated function will be denoted by $N_{r_p}(\rho)$, i.e. $\ds N_{r_{p}}(\rho )=\sup_{p\in\NN} \log_+ \frac{\rho^{p }}{M_p\prod_{j=1}^{p}r_j}$, $\rho > 0$. Note, for given $r_{p}$ and every $k > 0 $ there is $\rho _{0} > 0$ such that $\ds N_{r_{p}} (\rho ) \leq M(k \rho )$, for $\rho > \rho _{0}$.\\
\indent It is said that $\ds P(\xi ) =\sum _{\alpha \in \NN^d}c_{\alpha } \xi^{\alpha}$, $\xi \in \RR^d$, is an ultrapolynomial of the class $(M_{p})$, resp. $\{M_{p}\}$, whenever the coefficients $c_{\alpha }$ satisfy the estimate $|c_{\alpha }|  \leq C L^{\alpha }M_{\alpha}$, $\alpha \in \NN^d$ for some $L > 0$ and $C>0$, resp. for every $L > 0 $ and some $C_{L} > 0$. The corresponding operator  $P(D)=\sum_{\alpha} c_{\alpha}D^{\alpha}$ is an ultradifferential operator of the class $(M_{p})$, resp. $\{M_{p}\}$ and they act continuously on $\EE^{(M_p)}(U)$ and $\DD^{(M_p)}(U)$, resp. $\EE^{\{M_p\}}(U)$ and $\DD^{\{M_p\}}(U)$ and the corresponding spaces of ultradistributions.\\
\indent We denote by $\SSS^{M_{p},m}_{2} \left(\RR^d\right)$, $m > 0$, the space of all smooth functions $\varphi$ which satisfy
\beq\label{75}
\sigma_{m,2}(\varphi ): = \left( \sum_{\alpha,\beta\in\NN^d} \int_{\RR^d} \left|\frac{m^{|\alpha|+|\beta|}\langle x\rangle^{|\alpha|}D^{\beta}\varphi(x)}{M_{\alpha}M_{\beta}}\right| ^{2} dx \right) ^{1/2}<\infty,
\eeq
supplied with the topology induced by the norm $\sigma _{m,2}$. The spaces $\SSS'^{(M_{p})}$ and $\SSS'^{\{M_{p}\}}$ of tempered ultradistributions of Beurling and Roumieu type respectively, are defined as the strong duals of the spaces $\ds\SSS^{(M_{p})}=\lim_{\substack{\longleftarrow\\ m\rightarrow\infty}}\SSS^{M_{p},m}_{2}\left(\RR^d\right)$ and $\ds\SSS^{\{M_{p}\}}=\lim_{\substack{\longrightarrow\\ m\rightarrow 0}}\SSS^{M_{p},m}_{2}\left(\RR^d\right)$, respectively. All the good properties of $\SSS^*$ and its strong dual follow from the equivalence of the sequence of norms $\sigma_{m,2}$, $m > 0$, with each of the following sequences of norms (see \cite{PilipovicK}, \cite{PilipovicU}):\\
\indent $(a)$ $\sigma_{m,p}$, $m > 0$; $p \in [1, \infty ]$ is fixed;\\
\indent $(b)$ $s_{m,p}$, $m > 0$; $p \in [1,\infty ]$ is fixed, where $\ds s_{m,p}(\varphi): =\sum_{\alpha ,\beta  \in  \NN^d}\frac{m^{|\alpha| +|\beta| }\| |\cdot|^{\beta }D^{\alpha}\varphi(\cdot)\|_{L^p}}{M_{\alpha }M_{\beta }}$;\\
\indent $(c)$ $s_{m}$, $m > 0$, where $\ds s_{m}(\varphi):=\sup_{\alpha\in \NN^d}\frac{m^{|\alpha|}
\| D^{\alpha}\varphi(\cdot) e^{M(m|\cdot|)}\|_{L_{\infty}}}{M_{\alpha }}$.\\
If we denote by $\SSS^{M_p,m}_{\infty}\left(\RR^d\right)$ the space of all infinitely differentiable functions on $\RR^d$ for which the norm $\sigma_{m,\infty}$ is finite (obviously it is a Banach space), then $\ds\SSS^{(M_p)}\left(\RR^d\right)=\lim_{\substack{\longleftarrow\\ m\rightarrow\infty}} \SSS^{M_p,m}_{\infty}\left(\RR^d\right)$ and $\ds\SSS^{\{M_p\}}\left(\RR^d\right)=\lim_{\substack{\longrightarrow\\ m\rightarrow 0}} \SSS^{M_p,m}_{\infty}\left(\RR^d\right)$. Also, for $m_2>m_1$, the inclusion $\SSS^{M_p,m_2}_{\infty}\left(\RR^d\right)\longrightarrow\SSS^{M_p,m_1}_{\infty}\left(\RR^d\right)$ is a compact mapping. In \cite{PilipovicT} and \cite{PilipovicK} it is proved that $\ds\SSS^{\{M_{p}\}} = \lim_{\substack{\longleftarrow\\ r_{i}, s_{j} \in \mathfrak{R}}}\SSS^{M_{p}}_{(r_{p}),(s_{q})}$, where $\ds\SSS^{M_{p}}_{(r_{p}),(s_{q})}=\left\{\varphi \in \mathcal{C}^{\infty} \left(\RR^d\right)|\gamma _{(r_{p}),(s_{q})}(\varphi)<\infty\right\}$ and
\beqs
\gamma_{(r_{p}),(s_{q})}(\varphi) =\sup_{\alpha,\beta\in  \NN^d}\frac{\left\|\langle x\rangle^{|\beta|}D^{\alpha}\varphi(x)\right\|_{L^{2}}} {\left(\prod^{|\alpha|}_{p=1}r_{p}\right)M_{\alpha}\left(\prod^{|\beta|}_{q=1}s_{q}\right)M_{\beta}}.
\eeqs
Also, the Fourier transform is a topological automorphism of $\SSS^*$ and of $\SSS'^*$.

\section{Laplace transform}

For a set $B\subseteq\RR^d$ denote by $\mathrm{ch\,}B$ the convex hull of $B$.

\begin{theorem}\label{t1}
Let $B$ be a connected open set in $\RR^d_{\xi}$ and $T\in\DD'^{*}(\RR^d_x)$ be such that, for all $\xi\in B$, $e^{-x\xi}T(x)\in\SSS'^{*}(\RR^d_x)$. Then the Fourier transform $\mathcal{F}_{x\rightarrow\eta}\left(e^{-x\xi}T(x)\right)$ is an analytic function of $\zeta=\xi+i\eta$ for $\xi\in \mathrm{ch\,}B$, $\eta\in\RR^d$. Furthermore, it satisfies the following estimates:\\
\indent for every $K\subset\subset\mathrm{ch\,}B$ there exist $k>0$ and $C>0$, resp. for every $k>0$ there exists $C>0$, such that
\beq\label{3}
|\mathcal{F}_{x\rightarrow\eta}(e^{-x\xi}T(x))(\xi+i\eta)|\leq Ce^{M(k|\eta|)},\, \forall \xi\in K, \forall\eta\in\RR^d.
\eeq
\end{theorem}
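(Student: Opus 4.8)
The plan is to set $F(\zeta):=\mathcal{F}_{x\to\eta}(e^{-x\xi}T)(\eta)$ with $\zeta=\xi+i\eta$, which for each fixed $\xi\in B$ is a priori only an element of $\SSS'^{*}(\RR^d_\eta)$, since $\mathcal F$ is a topological automorphism of $\SSS'^{*}$. The entire content of the first assertion is therefore that this tempered ultradistribution is in fact a holomorphic function of $\zeta$ on the tube $B+i\RR^d$, while \eqref{3} describes its growth. I would organise the argument in the order: (i) regularity in the transform variable; (ii) differentiability in $\xi$ and the Cauchy--Riemann equations; (iii) the growth bound; (iv) passage to $\mathrm{ch\,}B$.

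For (i) I would exploit that $B$ is open. Fixing $\xi_0\in B$ and $\rho>0$ with $\{\xi:|\xi-\xi_0|\le\rho\}\subset B$, I would first show that multiplication by each coordinate $x_j$ sends $e^{-x\xi_0}T$ back into $\SSS'^{*}$; the key point is that a factor growing like $e^{s|x_j|}$ can be absorbed by passing from $\xi_0$ to a neighbouring admissible point $\xi_0+se_j$ or $\xi_0-se_j$ in $B$, against which $e^{-x\xi}T$ is still a tempered ultradistribution. Iterating and summing with ultrapolynomial coefficients yields $P(x)\,e^{-x\xi_0}T\in\SSS'^{*}$ for every ultrapolynomial $P$ of class $*$. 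Transporting this through $\mathcal F$, where multiplication by $x^\gamma$ becomes the ultradifferential operator $i^{|\gamma|}D_\eta^\gamma$, shows that $Q(D_\eta)\,\mathcal F(e^{-x\xi_0}T)\in\SSS'^{*}$ for every ultradifferential operator $Q$ of class $*$; by the regularity theory for $\SSS'^{*}$ this forces $\mathcal F(e^{-x\xi_0}T)$ to be represented by a smooth, indeed ultradifferentiable, function of $\eta$. This absorption step, balancing the exponential weights against the decay $e^{-M(m|\cdot|)}$ carried by the norm $s_m$, is the main obstacle of the proof.

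For (ii) the same absorption argument gives that $\xi\mapsto e^{-x\xi}T$ is differentiable into $\SSS'^{*}$ with $\partial_{\xi_j}(e^{-x\xi}T)=-x_j e^{-x\xi}T$. Since $\partial_{\eta_j}\mathcal F(g)=\mathcal F(-ix_j g)$, applying $\mathcal F$ to both derivative relations yields $\partial_{\eta_j}F=i\,\partial_{\xi_j}F$, i.e. the Cauchy--Riemann equations $\partial_{\bar\zeta_j}F=0$; together with the joint smoothness from (i) this proves $F$ holomorphic on the tube $B+i\RR^d$.

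For (iii) I would note that for a compact $K\subset\subset B$ the family $\{e^{-x\xi}T:\xi\in K\}$ is bounded, hence equicontinuous, in $\SSS'^{*}$, so each functional is dominated by a fixed weighted norm $s_m$ whose weight is $e^{M(m|\cdot|)}$. Combining this dual bound with the smoothness from (i), to pass from the distributional estimate to point values, gives $|F(\xi+i\eta)|\le Ce^{M(k|\eta|)}$ for $\xi\in K$; in the Beurling case a single $m$ suffices and produces one pair $(k,C)$, while in the Roumieu case one runs the argument over the sequences $(r_p)\in\mathfrak R$ and uses the relation $N_{r_p}(\rho)\le M(k\rho)$ to recast the bound in the form ``for every $k$ there is $C$''. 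Finally, for (iv), holomorphy on the tube $B+i\RR^d$ extends, by Bochner's tube theorem, to the convex-hull tube $\mathrm{ch\,}B+i\RR^d$, giving analyticity for $\xi\in\mathrm{ch\,}B$; the exponential estimate propagates to compacts of $\mathrm{ch\,}B$ by a Phragm\'en--Lindel\"of argument along real segments joining points of $B$, using that the right-hand side of \eqref{3} depends only on $\eta=\mathrm{Im\,}\zeta$.
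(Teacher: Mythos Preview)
Your step (i) contains a genuine gap that makes the argument collapse. The conclusion you aim for there, namely that $P(x)\,e^{-x\xi_0}T\in\SSS'^{*}$ for every ultrapolynomial $P$ of class $*$, is \emph{automatic}: multiplication by an ultrapolynomial of class $*$ is a continuous operation on $\SSS^{*}$ and hence on $\SSS'^{*}$, so this statement holds for \emph{every} element of $\SSS'^{*}$ and uses nothing about the openness of $B$. Consequently the Fourier-side statement ``$Q(D_\eta)\mathcal F(e^{-x\xi_0}T)\in\SSS'^{*}$ for every ultradifferential operator $Q$'' is also trivially true for every tempered ultradistribution, and there is no ``regularity theory'' that turns it into smoothness of $\mathcal F(e^{-x\xi_0}T)$. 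The absorption idea you sketch (moving from $\xi_0$ to $\xi_0\pm s e_j$) is the right instinct, but it must produce something strictly stronger than polynomial or ultrapolynomial weights: you need a genuine \emph{exponential} gain.

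The paper's proof does exactly this. Given $K\subset\subset\mathrm{ch\,}B$, it chooses finitely many points $\xi^{(1)},\dots,\xi^{(l)}\in B$ whose convex hull contains a $4\varepsilon$-neighbourhood of $K$, and uses the partition
\[
e^{-x\xi}=a(x,\xi)\sum_{k=1}^{l}e^{-x\xi^{(k)}},\qquad a(x,\xi)=e^{-x\xi}\Big(\sum_{k}e^{-x\xi^{(k)}}\Big)^{-1},
\]
together with analytic estimates on $a(x,\xi)$, to prove that the family $\{e^{\varepsilon\sqrt{1+|x|^2}}e^{-x\xi}T:\xi\in K\}$ is bounded in $\SSS'^{*}$. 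Since $e^{-\varepsilon\sqrt{1+|x|^2}}\in\SSS^{*}$, this yields the \emph{pointwise} representation
\[
f(\xi+i\eta)=\big\langle e^{\varepsilon\sqrt{1+|x|^2}}e^{-x\xi}T(x),\,e^{-ix\eta}e^{-\varepsilon\sqrt{1+|x|^2}}\big\rangle,
\]
from which smoothness, the Cauchy--Riemann equations, and the growth bound \eqref{3} all follow by direct estimation of the second argument in the appropriate $\SSS^{M_p,h}_\infty$ norm. Note also that this argument works from the outset on compacts of $\mathrm{ch\,}B$, so neither Bochner's tube theorem nor a Phragm\'en--Lindel\"of step is needed; your proposed step (iv) is avoidable once the exponential weight is in place.
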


\begin{proof} Let $K$ be a fixed compact subset of $\mathrm{ch\,}B$. There exists $0<\varepsilon<1/4$ and $\xi^{(1)},...,\xi^{(l)}\in B$ such that the convex hull $\Pi$ of the set $\{\xi^{(1)},...,\xi^{(l)}\}$ contains the closed $4\varepsilon$ neighborhood of $K$ (obviously $\Pi\subset\subset \mathrm{ch\,}B$). We shell prove that the set
\beq\label{5}
\left\{S\in\DD'^{*}|S(x)=T(x)e^{-x\xi+\varepsilon\sqrt{1+|x|^2}},\xi\in K\right\}
\eeq
is bounded in $\SSS'^{*}$. Note that by the condition in the theorem $T(x)e^{-x\xi}\in\SSS'^{*}$ and $e^{\varepsilon\sqrt{1+|x|^2}}$ is the restriction on the real axis of the function $e^{\varepsilon\sqrt{1+z^2}}$ that is analytic and single valued on the strip $\RR^d+i\{y\in\RR^d||y|<1/4\}$, and hence $e^{\varepsilon\sqrt{1+|x|^2}}$ is in $\EE^{*}$. Note that
\beq\label{7}
T(x)e^{-x\xi+\varepsilon\sqrt{1+|x|^2}}=\sum_{k=1}^l e^{\varepsilon\sqrt{1+|x|^2}}a(x,\xi)T(x)e^{-x\xi^{(k)}},
\eeq
where $\ds a(x,\xi)=e^{-x\xi}\left(\sum_{k=1}^l e^{-x\xi^{(k)}}\right)^{-1}$. The function $a(x,\xi)$ satisfies the following conditions:\\
\indent $i)$ $0<a(x,\xi)\leq 1$, $(x,\xi)\in\RR^d\times\Pi$;\\
\indent $ii)$ $e^{\varepsilon'\sqrt{1+|x|^2}}a(x,\xi)\leq e^{\varepsilon'}$, $(x,\xi)\in\RR^d\times K$, and $\forall\varepsilon'\leq 4\varepsilon$;\\
\indent $iii)$ $a(x,\xi)\in \mathcal{C}^{\infty}\left(\RR^{2d}\right)$.\\
$iii)$ it's obvious. To prove $i)$, take $\xi\in\Pi$. Then there exist $t_1,...,t_l\geq0$ such that
$\ds\xi=\sum_{k=1}^l t_k\xi^{(k)}$ and $\ds\sum_{k=1}^l t_k=1$. Then, by the weighted arithmetic mean-geometric mean inequality, we have
\beqs
e^{-x\xi}=\prod_{k=1}^l e^{-xt_k\xi^{(k)}}\leq\sum_{k=1}^l t_ke^{-x\xi^{(k)}}\leq\sum_{k=1}^l e^{-x\xi^{(k)}},
\eeqs
from where it follows $i)$. For the prove of $ii)$, note that, for $(x,\xi)\in\RR^d\times K$,
\beqs
e^{\varepsilon'\sqrt{1+|x|^2}}a(x,\xi)\leq e^{\varepsilon'+\varepsilon'|x|}a(x,\xi)=e^{\varepsilon'}\max_{|t|\leq\varepsilon'}e^{-tx}a(x,\xi)=
e^{\varepsilon'}\max_{|t|\leq\varepsilon'}a(x,\xi+t)\leq e^{\varepsilon'},
\eeqs
where the last inequality follows from $i)$.\\
\indent Now we will estimate the derivatives of $a(x,\xi)$. Let $\ds s=\max_{\xi\in\Pi} |\xi|$. Then $a(z,\xi)$ is an analytic function of $z=x+iy$ on the strip $\RR^d+i\{y\in\RR^d||y|s<\pi/4\}$, for every fixed $\xi\in\Pi$, because
\beqs
\left|\sum_{k=1}^l e^{-z\xi^{(k)}}\right|^2=\left|\sum_{k=1}^l e^{-x\xi^{(k)}}e^{-iy\xi^{(k)}}\right|^2\geq
\left(\sum_{k=1}^l e^{-x\xi^{(k)}}\cos y\xi^{(k)}\right)^2\geq\left(\sum_{k=1}^l e^{-x\xi^{(k)}}\frac{\sqrt{2}}{2}\right)^2,
\eeqs
and hence
\beq\label{10}
\left|\sum_{k=1}^l e^{-z\xi^{(k)}}\right|\geq\frac{\sqrt{2}}{2}\sum_{k=1}^l e^{-x\xi^{(k)}}>0,
\eeq
Take $0<r<1/\sqrt{d}$ so small such that $rs\sqrt{d}<\pi/4$. Then, from Cauchy integral formula, we have
\beqs
|\partial_z^{\alpha}a(x,\xi)|\leq \frac{\alpha !}{r^{|\alpha|}}
\sup_{|w_1-x_1|\leq r,...,|w_d-x_d|\leq r}\left|\frac{e^{-w\xi}}{\sum_{k=1}^l e^{-w\xi^{(k)}}}\right|.
\eeqs
If we use the inequality (\ref{10}), we get (we put $w=u+iv$)
\beqs
\left|\frac{e^{-(u+iv)\xi}}{\sum_{k=1}^l e^{-(u+iv)\xi^{(k)}}}\right|&\leq& \frac{\sqrt{2}e^{-u\xi}}{\sum_{k=1}^l e^{-u\xi^{(k)}}}
=\frac{\sqrt{2}e^{-x\xi}e^{-(u-x)\xi}}{\sum_{k=1}^l e^{-x\xi^{(k)}}e^{-(u-x)\xi^{(k)}}}\\
&\leq&\frac{\sqrt{2}e^{-x\xi}e^{|u-x||\xi|}}{\sum_{k=1}^l e^{-x\xi^{(k)}}e^{-|u-x|\left|\xi^{(k)}\right|}}
\leq\frac{\sqrt{2}e^{-x\xi}e^{rs\sqrt{d}}}
{\sum_{k=1}^l e^{-x\xi^{(k)}}e^{-rs\sqrt{d}}}=\sqrt{2}e^{2rs\sqrt{d}}a(x,\xi).
\eeqs
So, we obtain the estimate
\beq\label{12}
\left|\partial_x^{\alpha}a(x,\xi)\right|\leq \sqrt{2}e^{2s}\frac{\alpha !}{r^{|\alpha|}}a(x,\xi).
\eeq
Note that, by the previous estimate and the property $ii)$ of $a(x,\xi)$, it follows that $a(x,\xi)\in \SSS^{*}$ for every $\xi\in K$ and the set $\{a(x,\xi)|\xi\in K\}$ is a bounded set in $\SSS^{*}$. We will estimate the derivatives of $e^{\varepsilon\sqrt{1+|x|^2}}$. The function $e^{\varepsilon\sqrt{1+z^2}}$ is analytic on the strip $\RR^d+i\{y\in\RR^d||y|<1/4\}$, where we take the principal branch of the square root which is single valued and analytic on $\CC\backslash (-\infty,0]$. If we take $r<1/(8d)$, from the Cauchy integral formula, we get the estimate $\ds\left|\partial_z^{\alpha} e^{\varepsilon\sqrt{1+|x|^2}}\right|\leq\frac{\alpha !}{r^{|\alpha|}}
\sup_{|w_1-x_1|\leq r,...,|w_d-x_d|\leq r}\left|e^{\varepsilon\sqrt{1+w^2}}\right|$. Put $w=u+iv$ and estimate as follows
\beqs
\left|e^{\varepsilon\sqrt{1+w^2}}\right|&=& e^{\mathrm{Re\,}\left(\varepsilon\sqrt{1+w^2}\right)}\leq e^{\left|\varepsilon\sqrt{1+w^2}\right|}\leq e^{\varepsilon{\sqrt[4]{(1+|u|^2-|v|^2)^2+4(uv)^2}}}\leq
e^{\varepsilon{\sqrt{1+|u|^2-|v|^2+2|uv|}}}\\
&\leq& e^{\varepsilon{\sqrt{1+2|u|^2}}}\leq e^{\varepsilon{\sqrt{1+4|u-x|^2+4|x|^2}}}\leq e^{\varepsilon{\sqrt{1+1+4|x|^2}}}\leq e^{2\varepsilon{\sqrt{1+|x|^2}}}.
\eeqs
Hence
\beq\label{13}
\left|\partial_x^{\alpha} e^{\varepsilon\sqrt{1+|x|^2}}\right|\leq\frac{\alpha !}{r^{|\alpha|}}
e^{2\varepsilon\sqrt{1+|x|^2}}.
\eeq
If we take $r$ small enough we can make the previous estimates for the derivatives of $a(x,\xi)$ and $e^{\varepsilon\sqrt{1+|x|^2}}$ to hold for the same $r$. Now we obtain
\beqs
\left|D^{\alpha}_x \left(e^{\varepsilon\sqrt{1+|x|^2}}a(x,\xi)\right)\right|
&\leq& \sum_{\beta\leq\alpha} {\alpha\choose\beta}\frac{(\alpha-\beta)!}{r^{|\alpha-\beta|}}e^{2\varepsilon\sqrt{1+|x|^2}}\cdot
\sqrt{2}e^{2s}\frac{\beta !}{r^{|\beta|}}a(x,\xi)\\
&\leq& \sqrt{2}e^{2s}\frac{\alpha !}{r^{|\alpha|}}2^{|\alpha|}e^{2\varepsilon\sqrt{1+|x|^2}}a(x,\xi).
\eeqs
Using the property $ii)$ of the function $a(x,\xi)$, we get
\beq\label{15}
\left|D^{\alpha}_x \left(e^{\varepsilon\sqrt{1+|x|^2}}a(x,\xi)\right)\right|\leq \sqrt{2}e^{2s}
\frac{\alpha ! 2^{|\alpha|}}{r^{|\alpha|}}e^{2\varepsilon\sqrt{1+|x|^2}}a(x,\xi)\leq
\sqrt{2}e^{2s+2\varepsilon}\frac{\alpha ! 2^{|\alpha|}}{r^{|\alpha|}},\, \forall \xi\in K.
\eeq
By this estimate and proposition 7 of \cite{PBD} one has $e^{\varepsilon\sqrt{1+|x|^2}}a(x,\xi)$ is a multiplier for $\SSS'^{*}$. Because of (\ref{7}), (\ref{5}) is a subset of $\SSS'^{*}$. Now to prove that  (\ref{5}) is bounded in $\SSS'^{*}$. We will give the prove only in the $\{M_p\}$ case, the $(M_p)$ case is similar. Let $\psi\in\SSS^{\{M_p\}}$. There exists $h>0$ such that $\psi\in\SSS^{M_p,h}_{\infty}$. Note that
\beqs
\left\langle e^{\varepsilon\sqrt{1+|x|^2}}a(x,\xi)T(x)e^{-x\xi^{(k)}},\psi(x)\right\rangle=\left\langle T(x)e^{-x\xi^{(k)}},e^{\varepsilon\sqrt{1+|x|^2}}a(x,\xi)\psi(x)\right\rangle,\, \forall k\in\{1,...,l\}, \forall \xi\in K.
\eeqs
Choose $m\leq h/4$. By (\ref{15}), we have\\
$\ds\frac{m^{|\alpha|+|\beta|}\langle x\rangle^{\beta}\left|D^{\alpha}\left(e^{\varepsilon\sqrt{1+|x|^2}}a(x,\xi)\psi(x)\right)\right|}{M_{\alpha}M_{\beta}}$
\beqs
&\leq&m^{|\alpha|+|\beta|}\langle x\rangle^{\beta}\sum_{\gamma\leq\alpha}{\alpha\choose\gamma}\frac{\sqrt{2}e^{2s+2\varepsilon}(\alpha-\gamma) ! 2^{|\alpha-\gamma|}|D^{\gamma}\psi(x)|}{r^{|\alpha-\gamma|}M_{\alpha}M_{\beta}}\\
&\leq&C_1\sigma_{h,\infty}(\psi)\sum_{\gamma\leq\alpha}{\alpha\choose\gamma}\frac{h^{|\alpha|+|\beta|}(\alpha-\gamma) ! 2^{|\alpha-\gamma|}}{4^{|\alpha|+|\beta|}r^{|\alpha-\gamma|}M_{\alpha-\gamma}h^{|\gamma|+|\beta|}}\leq
C_1\sigma_{h,\infty}(\psi)\sum_{\gamma\leq\alpha}{\alpha\choose\gamma}\frac{h^{|\alpha|-|\gamma|}(\alpha-\gamma) ! } {2^{|\alpha|}r^{|\alpha-\gamma|}M_{\alpha-\gamma}}\\
&\leq& C\sigma_{h,\infty}(\psi),\, \forall \xi\in K.
\eeqs
Hence $e^{\varepsilon\sqrt{1+|x|^2}}a(x,\xi)T(x)e^{-x\xi^{(k)}}$, $\xi\in K$, is bounded in $\SSS'^{\{M_p\}}$. Buy (\ref{7}), the set (\ref{5}) is bounded in $\SSS'^{\{M_p\}}$.\\
\indent We will prove that $e^{-\varepsilon\sqrt{1+|x|^2}}\in \SSS^{*}$. In order to do that we will estimate the derivatives of $e^{-\varepsilon\sqrt{1+|x|^2}}$ with the Cauchy integral formula (similarly as for $e^{\varepsilon\sqrt{1+|x|^2}}$). We obtain
\beqs
\left|\partial_z^{\alpha} e^{-\varepsilon\sqrt{1+|x|^2}}\right|\leq\frac{\alpha !}{r^{|\alpha|}}
\sup_{|w_1-x_1|\leq r,...,|w_d-x_d|\leq r}\left|e^{-\varepsilon\sqrt{1+w^2}}\right|,
\eeqs
where, $0<r<1/(8d)$. Let $w=u+iv$. Then, if we put $\ds\rho=\sqrt{\left(1+|u|^2-|v|^2\right)^2+4(uv)^2}$, $\ds\cos\theta= \frac{1+|u|^2-|v|^2}{\sqrt{\left(1+|u|^2-|v|^2\right)^2+4(uv)^2}}$, $\ds\sin\theta= \frac{2uv}
{\sqrt{\left(1+|u|^2-|v|^2\right)^2+4(uv)^2}}$ (where $\theta\in(-\pi,\pi)$), we have that $\theta\in(-\pi/2,\pi/2)$ (because $\cos\theta>0$ and $\theta\in(-\pi,\pi)$) and
\beqs
\mathrm{Re\,}\sqrt{1+|u|^2-|v|^2+2iuv}&=&\mathrm{Re\,}\sqrt{\rho(\cos\theta+i\sin\theta)}=
\mathrm{Re\,}\sqrt{\rho}\left(\cos\frac{\theta}{2}+i\sin\frac{\theta}{2}\right) =\sqrt{\rho}\cos\frac{\theta}{2}\geq\frac{\sqrt{\rho}}{2},
\eeqs
where the second equality holds because we take the principal branch of $\sqrt{z}$. Because $r<1/(8d)$, we get
\beqs
\left|e^{-\varepsilon\sqrt{1+w^2}}\right|&=& e^{\mathrm{Re\,}\left(-\varepsilon\sqrt{1+w^2}\right)}\leq
e^{-\frac{\varepsilon}{2}\sqrt[4]{\left(1+|u|^2-|v|^2\right)^2+4(uv)^2}}\leq
e^{-\frac{\varepsilon}{2}\sqrt{1+|u|^2-|v|^2}}\\
&\leq& e^{-\frac{\varepsilon}{2}\sqrt{1+\frac{|x|^2}{2}-|u-x|^2-|v|^2}}\leq
e^{-\frac{\varepsilon}{4}\sqrt{1+|x|^2}}.
\eeqs
Hence, we obtain
\beq\label{17}
\left|\partial_x^{\alpha} e^{-\varepsilon\sqrt{1+|x|^2}}\right|\leq\frac{\alpha !}{r^{|\alpha|}}
e^{-\frac{\varepsilon}{4}\sqrt{1+|x|^2}}.
\eeq
From this, it easily follows that $e^{-\varepsilon\sqrt{1+|x|^2}}\in\SSS^{*}$. So $e^{-x\xi}T(x)\in\SSS'^*\left(\RR^d_x\right)$, for $\xi\in K$, because $e^{-x\xi}T(x)=T(x)e^{-x\xi+\varepsilon\sqrt{1+|x|^2}}e^{-\varepsilon\sqrt{1+|x|^2}}$ and we proved that $T(x)e^{-x\xi+\varepsilon\sqrt{1+|x|^2}}\in\SSS'^*\left(\RR^d_x\right)$, for $\xi\in K$.\\
\indent Put $f(\xi+i\eta)=\mathcal{F}_{x\rightarrow\eta}(e^{-x\xi}T(x))$. We will prove that $f$ is an analytic function on $\mathrm{ch\,}B+i\RR^d$. Let $U$ be an arbitrary bounded open subset of $\mathrm{ch\,}B$ such that $K=\overline{U}\subset\subset \mathrm{ch\,}B$. For $\psi\in\SSS^{*}$ and $\xi\in U$, we have
\beqs
\langle f(\xi+i\eta),\psi(\eta)\rangle&=&\left\langle \mathcal{F}_{x\rightarrow\eta}\left(e^{-x\xi}T(x)\right),\psi(\eta)\right\rangle=\left\langle e^{-x\xi}T(x),\mathcal{F}(\psi)(x)\right\rangle\\
&=&\left\langle e^{-x\xi}T(x),\int_{\RR^d}e^{-ix\eta}\psi(\eta)d\eta\right\rangle
=\left\langle e^{\varepsilon\sqrt{1+|x|^2}}e^{-x\xi}T(x),
e^{-\varepsilon\sqrt{1+|x|^2}}\int_{\RR^d}e^{-ix\eta}\psi(\eta)d\eta\right\rangle\\
&=&\left\langle \left(e^{\varepsilon\sqrt{1+|x|^2}}e^{-x\xi}T(x)\right)\otimes 1_{\eta},
e^{-\varepsilon\sqrt{1+|x|^2}}e^{-ix\eta}\psi(\eta)\right\rangle\\
&=&\int_{\RR^d}\left\langle e^{\varepsilon\sqrt{1+|x|^2}}e^{-x\xi}T(x)e^{-ix\eta},e^{-\varepsilon\sqrt{1+|x|^2}}\right\rangle\psi(\eta)d\eta.
\eeqs
Hence
\beq\label{20}
f(\xi+i\eta)=\left\langle e^{\varepsilon\sqrt{1+|x|^2}}e^{-x\xi}T(x)e^{-ix\eta},e^{-\varepsilon\sqrt{1+|x|^2}}\right\rangle.
\eeq
First we will prove that $f\in \mathcal{C}^{\infty}\left(U\times\RR^d_{\eta}\right)$. We will prove the differentiability only in $\xi_1$ and in the $\{M_p\}$ case. The existence of the rest of the derivatives is proved in analogous way and the $(M_p)$ case is treated similarly. Let $\xi^{(0)}=\left(\xi^{(0)}_1,...,\xi^{(0)}_d\right)=\left(\xi^{(0)}_1,\xi'\right)\in U$, $\xi=\left(\xi^{(0)}_1+\xi_1,\xi^{(0)}_2,...,\xi^{(0)}_d\right)=\left(\xi^{(0)}_1+\xi_1,\xi'\right)$, $x=(x_1,...,x_d)=(x_1,x')$. Let $0<|\xi_1|<\delta<\varepsilon<1$ such that the ball with radius $\delta$ and center in $\xi^{(0)}$ is contained in $U$. Then, by using (\ref{7}) and (\ref{20}), we obtain\\
$\ds\frac{f(\xi+i\eta)-f(\xi^{(0)}+i\eta)}{\xi_1}-\left\langle e^{\varepsilon\sqrt{1+|x|^2}}(-x_1)e^{-x\xi^{(0)}}T(x)e^{-ix\eta},e^{-\varepsilon\sqrt{1+|x|^2}}\right\rangle$
\beqs
=\sum_{k=1}^l\left\langle e^{-ix\eta} e^{-x\xi^{(k)}}T(x)e^{\varepsilon\sqrt{1+|x|^2}}\left(\frac{a(x,\xi)-a\left(x,\xi^{(0)}\right)}{\xi_1}+x_1 a\left(x,\xi^{(0)}\right)\right),e^{-\varepsilon\sqrt{1+|x|^2}}\right\rangle.
\eeqs
It is enough to prove that, for every $\psi\in\SSS^{\{M_p\}}$,
\beqs
\ds e^{\varepsilon\sqrt{1+|x|^2}}\left(\frac{a(x,\xi)-a\left(x,\xi^{(0)}\right)}{\xi_1}+x_1 a\left(x,\xi^{(0)}\right)\right)\psi(x)\longrightarrow 0, \mbox{ when } \xi_1\longrightarrow 0, \mbox{ in } \SSS^{\{M_p\}}.
\eeqs
First note that
\beqs
e^{\varepsilon\sqrt{1+|x|^2}}\left(\frac{a(x,\xi)-a\left(x,\xi^{(0)}\right)}{\xi_1}+x_1 a\left(x,\xi^{(0)}\right)\right)=
e^{\varepsilon\sqrt{1+|x|^2}}a\left(x,\xi^{(0)}\right)\left(\frac{e^{-x_1\xi_1}-1}{\xi_1}+x_1\right).
\eeqs
Now, we get
\beqs
\frac{e^{-x_1\xi_1}-1}{\xi_1}+x_1=\frac{1}{\xi_1}\sum_{n=1}^{\infty}\frac{(-1)^nx_1^n\xi_1^n}{n!}+x_1=
\sum_{n=2}^{\infty}\frac{(-1)^nx_1^n\xi_1^{n-1}}{n!}.
\eeqs
So, for $j\in\NN$, $j\geq2$ and $0<|\xi_1|<\delta<\varepsilon<1$, we have
\beqs
\left|D^j_{x_1}\left(\frac{e^{-x_1\xi_1}-1}{\xi_1}+x_1\right)\right|&=&
\left|D^j_{x_1}\left(\sum_{n=2}^{\infty}\frac{(-1)^nx_1^n\xi_1^{n-1}}{n!}\right)\right|=
\left|\sum_{n=j}^{\infty}\frac{(-1)^n n!x_1^{n-j}\xi_1^{n-1}}{(n-j)!n!}\right|\\
&\leq& |\xi_1|\sum_{n=j}^{\infty}\frac{|x_1|^{n-j}|\xi_1|^{n-2}}{(n-j)!}\leq
|\xi_1|\sum_{n=j}^{\infty}\frac{|x_1|^{n-j}|\xi_1|^{n-j}}{(n-j)!}\leq\delta e^{|x_1|\delta}.
\eeqs
Using similar technic, we obtain the estimates
\beqs
\left|D_{x_1}\left(\frac{e^{-x_1\xi_1}-1}{\xi_1}+x_1\right)\right|\leq\delta |x_1| e^{|x_1|\delta} \mbox{ and } \left|\left(\frac{e^{-x_1\xi_1}-1}{\xi_1}+x_1\right)\right|\leq \delta|x_1|^2 e^{|x_1|\delta}.
\eeqs
So, in all cases, we have $\ds\left|D^j_{x_1}\left(\frac{e^{-x_1\xi_1}-1}{\xi_1}+x_1\right)\right|\leq \delta\langle x_1\rangle^2 e^{|x_1|\delta}.$ By using (\ref{15}), we get (for simpler notation we write $j$ for the $d$-tuple $(j,0,...,0)$)\\
$\ds\left|D^{\alpha}\left(e^{\varepsilon\sqrt{1+|x|^2}}a\left(x,\xi^{(0)}\right)
\left(\frac{e^{-x_1\xi_1}-1}{\xi_1}+x_1\right)\psi(x)\right)\right|$
\beqs
&=&\left|\sum_{\beta\leq\alpha}\sum_{j\leq\beta}{\alpha\choose\beta}{\beta\choose j} D^{\beta-j}\left(e^{\varepsilon\sqrt{1+|x|^2}}a\left(x,\xi^{(0)}\right)\right)
D^j\left(\frac{e^{-x_1\xi_1}-1}{\xi_1}+x_1\right)D^{\alpha-\beta}\psi(x)\right|\\
&\leq&\sum_{\beta\leq\alpha}\sum_{j\leq\beta}{\alpha\choose\beta}{\beta\choose j} \sqrt{2}e^{2s}
\frac{(\beta-j) ! 2^{|\beta-j|}}{r^{|\beta-j|}}e^{2\varepsilon\sqrt{1+|x|^2}}a\left(x,\xi^{(0)}\right)
\delta\langle x_1\rangle^2 e^{|x_1|\delta}|D^{\alpha-\beta}\psi(x)|\\
&\leq&C\delta\langle x_1\rangle^2\sum_{\beta\leq\alpha}\sum_{j\leq\beta}{\alpha\choose\beta}{\beta\choose j}
\left(\frac{2}{r}\right)^{|\beta-j|}(\beta-j) !|D^{\alpha-\beta}\psi(x)|,
\eeqs
where we used the inequality $e^{2\varepsilon\sqrt{1+|x|^2}}a(x,\xi^{(0)})e^{|x_1|\delta}
\leq e^{3\varepsilon\sqrt{1+|x|^2}}a(x,\xi^{(0)})\leq e^{3\varepsilon}$, which follows from the property $ii)$ of $a(x,\xi)$. Because $\psi\in\SSS^{\{M_p\}}$, there exists $m>0$ such that $\psi\in\SSS^{M_p,m}_{\infty}$. Choose $h$ such that $h<m/4$, $h<1/4$ and $hH<m$. We get\\
$\ds\frac{\ds h^{|\alpha|+|\beta|}\langle x\rangle^{\beta}\left|D^{\alpha}\left(e^{\varepsilon\sqrt{1+|x|^2}}a\left(x,\xi^{(0)}\right)
\left(\frac{e^{-x_1\xi_1}-1}{\xi_1}+x_1\right)\psi(x)\right)\right|}{M_{\alpha}M_{\beta}}$
\beqs
&\leq& C\delta\sum_{\gamma\leq\alpha}\sum_{j\leq\gamma}{\alpha\choose\gamma}{\gamma\choose j}\left(\frac{2}{r}\right)^{|\gamma-j|}(\gamma-j) !\frac{\langle x_1\rangle^2\langle x\rangle^{|\beta|}h^{|\alpha|+|\beta|}|D^{\alpha-\gamma}\psi(x)|}{M_{\alpha-\gamma}M_{\gamma-j}M_jM_{\beta}}\\
&\leq&C_1\delta\sum_{\gamma\leq\alpha}\sum_{j\leq\gamma}{\alpha\choose\gamma}{\gamma\choose j}\left(\frac{2}{r}\right)^{|\gamma-j|}(\gamma-j) !\frac{\langle x\rangle^{|\beta|+2}h^{|\alpha|+|\beta|} H^{|\beta|+2}|D^{\alpha-\gamma}\psi(x)|}{M_{\alpha-\gamma}M_{\gamma-j}M_jM_{\beta+2}}\\
&\leq&C_2\delta\sigma_{m,\infty}(\psi)\sum_{\gamma\leq\alpha}\sum_{j\leq\gamma}{\alpha\choose\gamma}{\gamma\choose j}\left(\frac{2}{r}\right)^{|\gamma-j|}(\gamma-j) !\frac{h^{|\alpha|+|\beta|} H^{|\beta|}}{m^{|\alpha|-|\gamma|}m^{|\beta|+2}M_{\gamma-j}M_j}\\
&\leq&C_3\delta\sigma_{m,\infty}(\psi)\sum_{\gamma\leq\alpha}\sum_{j\leq\gamma}{\alpha\choose\gamma}{\gamma\choose j}\left(\frac{2}{r}\right)^{|\gamma-j|}\left(\frac{h}{m}\right)^{|\alpha|-|\gamma|}
\left(\frac{hH}{m}\right)^{|\beta|}\frac{h^{|\gamma|}(\gamma-j) !}{M_{\gamma-j}M_j}
\leq C_0\delta\sigma_{m,\infty}(\psi),
\eeqs
where we use $(M.2)$ and the fact $\ds\frac{k^p p!}{M_p}\rightarrow 0$, when $p\rightarrow\infty$. Now, from this it follows that
\beqs
e^{\varepsilon\sqrt{1+|x|^2}}\left(\frac{a(x,\xi)-a\left(x,\xi^{(0)}\right)}{\xi_1}+x_1 a\left(x,\xi^{(0)}\right)\right)\psi(x)\longrightarrow 0,\,
\xi_1\longrightarrow 0
\eeqs
in $\SSS^{\{M_p\}}$ and by the above remarks, the differentiability of $f(\xi+i\eta)$ on $U\times \RR^d_{\eta}$ follows. Also, from the previous, we can conclude that $\partial_{\xi}^{\alpha}f(\xi+i\eta)=\left\langle e^{\varepsilon\sqrt{1+|x|^2}}(-x)^{\alpha}e^{-x\xi}T(x)e^{-ix\eta},e^{-\varepsilon\sqrt{1+|x|^2}}\right\rangle$ and similarly $\partial_{\eta}^{\alpha}f(\xi+i\eta)=\left\langle e^{\varepsilon\sqrt{1+|x|^2}}(-ix)^{\alpha}e^{-x\xi}T(x)e^{-ix\eta},e^{-\varepsilon\sqrt{1+|x|^2}}\right\rangle$. From this and the arbitrariness of $U$, the analyticity of $f(\xi+i\eta)$ follows because it satisfies the Cauchy-Riemann equations. So, for $\zeta=\xi+i\eta$, we get
\beq\label{25}
f(\zeta)=\left\langle e^{\varepsilon\sqrt{1+|x|^2}}e^{-x\zeta}T(x),e^{-\varepsilon\sqrt{1+|x|^2}}\right\rangle
\eeq
and $\partial_{\zeta}^{\alpha}f(\zeta)=\left\langle e^{\varepsilon\sqrt{1+|x|^2}}(-x)^{\alpha}e^{-x\zeta}T(x),e^{-\varepsilon\sqrt{1+|x|^2}}\right\rangle$, for $\zeta\in U+i\RR^d_{\eta}$, for each fixed $U$ ($\varepsilon$ depends on $U$).\\
\indent Now we will prove the estimates (\ref{3}) for $f(\xi+i\eta)$. Let $K\subset\subset \mathrm{ch\,}B$ be arbitrary but fixed. First we will consider the $(M_p)$ case. We know that $\SSS^{(M_p)}$ is a $(FS)$ - space and $\ds\SSS^{(M_p)}=\lim_{\substack{\longleftarrow\\ h\rightarrow\infty}}\SSS^{M_p,h}_{\infty}$. If we denote the closure of $\SSS^{(M_p)}$ in $\SSS^{M_p,h}_{\infty}$ by $\widetilde{\SSS}^{M_p,h}_{\infty}$ then $\ds\SSS^{(M_p)}=\lim_{\substack{\longleftarrow\\ h\rightarrow\infty}}\widetilde{\SSS}^{M_p,h}_{\infty}$ and the projective limit is reduced. Then $\ds\SSS'^{(M_p)}=\lim_{\substack{\longrightarrow\\ h\rightarrow\infty}}\widetilde{\SSS}'^{M_p,h}_{\infty}$ which is injective inductive limit with compact maps (because the projective limit is with compact maps). Because we proved that the set $\left\{S\in\DD'^{*}|S(x)=T(x)e^{-x\xi+\varepsilon\sqrt{1+|x|^2}},\xi\in K\right\}$ is bounded in $\SSS'^{(M_p)}$, it follows that there exists $h>0$ such that $\left\{S\in\DD'^{*}|S(x)=T(x)e^{-x\xi+\varepsilon\sqrt{1+|x|^2}},\xi\in K\right\}\subseteq \widetilde{\SSS}'^{M_p,h}_{\infty}$ and it's bounded there. By (\ref{17}), we have the estimate
\beqs
\frac{h^{|\alpha|+|\beta|}\langle x\rangle^{\beta}\left|D^{\alpha}_x \left(e^{-ix\eta}e^{-\varepsilon\sqrt{1+|x|^2}}\right)\right|}{M_{\alpha}M_{\beta}} &\leq&\sum_{\gamma\leq\alpha}{\alpha\choose\gamma}
\frac{(2h)^{|\alpha|-|\gamma|}(2h)^{|\gamma|}h^{|\beta|}\langle x\rangle^{\beta}
|\eta|^{\gamma}(\alpha-\gamma) !e^{-\frac{\varepsilon}{4}\sqrt{1+|x|^2}}}
{2^{|\alpha|}r^{|\alpha-\gamma|}M_{\alpha-\gamma}M_{\gamma}M_{\beta}}\\
&\leq& C_1\frac{1}{2^{|\alpha|}}\sum_{\gamma\leq\alpha}{\alpha\choose\gamma}\left(\frac{2h}{r}\right)^{|\alpha|-|\gamma|}
\frac{(\alpha-\gamma) !e^{M(h\langle x\rangle)}e^{M(2h|\eta|)}e^{-\frac{\varepsilon}{4}\langle x\rangle}}
{M_{\alpha-\gamma}}\\
&\leq& C' e^{M(2h|\eta|)},
\eeqs
where we use that $e^{M(h\langle x\rangle)}e^{-\frac{\varepsilon}{4}\langle x\rangle}$ is bounded and $\ds\frac{k^p p!}{M_p}\rightarrow 0$ when $p\rightarrow\infty$. Then, for $\xi\in K$ and $\eta\in\RR^d$,
\beqs
|f(\xi+i\eta)|=\left|\left\langle e^{\varepsilon\sqrt{1+|x|^2}}e^{-x\xi}T(x),e^{-ix\eta}e^{-\varepsilon\sqrt{1+|x|^2}}\right\rangle\right|\leq
C\left\|e^{-ix\eta}e^{-\varepsilon\sqrt{1+|x|^2}}\right\|_{\widetilde{\SSS}^{M_p,h}_{\infty}}\leq \tilde{C}e^{M(2h|\eta|)}.
\eeqs
Now we will consider the $\{M_p\}$ case. $\SSS^{\{M_p\}}$ is a $(DFS)$ - space and $\ds\SSS^{\{M_p\}}=\lim_{\substack{\longrightarrow\\ h\rightarrow 0}}\SSS^{M_p,h}_{\infty}$, where the inductive limit is injective with compact maps. Let $h>0$ be fixed. For shorter notation, denote by $F$ the set $\left\{S\in\DD'^{*}|S(x)=T(x)e^{-x\xi+\varepsilon\sqrt{1+|x|^2}},\xi\in K\right\}$ and by $J$ the inclusion $\SSS^{M_p,h}_{\infty}\longrightarrow \SSS^{\{M_p\}}$. Because we already proved that $F$ is a bounded subset of $\SSS'^{\{M_p\}}$, its image under ${}^{t}J$ (the transposed mapping of $J$) is a bounded subset of $\SSS'^{M_p,h}_{\infty}$. By the above calculations we see that $e^{-ix\eta}e^{-\varepsilon\sqrt{1+|x|^2}}$ is in $\SSS^{M_p,m}_{\infty}$, for every $m>0$. Hence, for $\xi\in K$ and $\eta\in\RR^d$, we have
\beqs
|f(\xi+i\eta)|&=&\left|\left\langle e^{\varepsilon\sqrt{1+|x|^2}}e^{-x\xi}T(x),e^{-ix\eta}e^{-\varepsilon\sqrt{1+|x|^2}}\right\rangle\right|
= \left|\left\langle {}^{t}J\left(e^{\varepsilon\sqrt{1+|x|^2}}e^{-x\xi}T(x)\right), e^{-ix\eta}e^{-\varepsilon\sqrt{1+|x|^2}}\right\rangle\right|\\
&\leq& C'_h\left\|e^{-ix\eta}e^{-\varepsilon\sqrt{1+|x|^2}}\right\|_{\SSS^{M_p,h}_{\infty}}\leq C_h e^{M(2h|\eta|)},
\eeqs
where we used the above estimate for $\ds\frac{h^{|\alpha|+|\beta|}\langle x\rangle^{\beta}\left|D^{\alpha}\left(e^{-ix\eta}e^{-\varepsilon\sqrt{1+|x|^2}}\right)\right|}
{M_{\alpha}M_{\beta}}$.
\end{proof}

\begin{remark}
If, for $S\in\DD'^{*}$, the conditions of the theorem are fulfilled, we call $\mathcal{F}_{x\rightarrow\eta}\left(e^{-x\xi}S(x)\right)$ the Laplace transform of $S$ and denote it by $\mathcal{L}(S)$. Moreover, by (\ref{25}),
\beqs
\mathcal{L}(S)(\zeta)=\left\langle e^{\varepsilon\sqrt{1+|x|^2}}e^{-x\zeta}S(x),e^{-\varepsilon\sqrt{1+|x|^2}}\right\rangle, \mbox{ for } \zeta\in U+i\RR^d_{\eta},
\eeqs
where $\overline{U}\subset\subset \mathrm{ch\,}B$ and $\varepsilon$ depends on $U$.\\
\indent Note that, if for $S\in\DD'^{*}$ the conditions of the theorem are fulfilled for $B=\RR^d$, then the choice of $\varepsilon$ can be made uniform for all $K\subset\subset\RR^d$.
\end{remark}

For the next theorem we need the following technical results.
\begin{lemma}\label{psss}
Let $(k_p)\in\mathfrak{R}$. There exists $(k'_p)\in\mathfrak{R}$ such that $k'_p\leq k_p$ and $\ds\prod_{j=1}^{p+q}k'_j\leq 2^{p+q}\prod_{j=1}^{p}k'_j\cdot\prod_{j=1}^{q}k'_j$, for all $p,q\in\ZZ_+$.
\end{lemma}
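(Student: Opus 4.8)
The plan is to first reformulate the multiplicative estimate as a simple structural condition on $(k'_p)$, and then to write down an explicit minorant of $(k_p)$ satisfying it. Writing $a_j=\log_2 k'_j$ and $A_p=\sum_{j=1}^p a_j$, the asserted inequality is, after taking $\log_2$, equivalent to $A_{p+q}\le (p+q)+A_p+A_q$. Since by definition $A_{p+q}-A_p-A_q=\sum_{i=1}^q\bigl(a_{p+i}-a_i\bigr)$, it suffices to arrange
\[
a_{p+i}-a_i\le\log_2\frac{p+i}{i}\qquad(p,i\in\ZZ_+).
\]
Indeed, summing this over $i=1,\dots,q$ turns the right-hand side into $\log_2\prod_{i=1}^q\frac{p+i}{i}=\log_2\binom{p+q}{q}\le\log_2 2^{\,p+q}=p+q$, using that a single binomial coefficient is bounded by the full binomial sum. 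Exponentiating, the displayed pointwise bound says exactly that $p\mapsto k'_p/p$ is non-increasing. Thus the whole lemma reduces to producing $(k'_p)\in\mathfrak R$ with $k'_p\le k_p$ for which $k'_p/p$ is non-increasing.

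Second, I would take the largest such minorant, namely
\[
k'_p:=p\,\min_{1\le j\le p}\frac{k_j}{j}.
\]
By construction $k'_p/p=\min_{1\le j\le p}(k_j/j)$ is non-increasing, and taking $j=p$ in the minimum gives $k'_p\le p\cdot(k_p/p)=k_p$. To see that $(k'_p)$ is non-decreasing, set $r_p=k'_p/p$ and distinguish the two cases in $r_p=\min(r_{p-1},k_p/p)$: if the minimum is $r_{p-1}$ then $k'_p-k'_{p-1}=r_{p-1}>0$; if it is $k_p/p$ then $k'_p=k_p$, while $k'_{p-1}=(p-1)r_{p-1}\le k_{p-1}\le k_p$, so again $k'_p\ge k'_{p-1}$. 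Positivity is clear from $k_j>0$.

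The one genuinely delicate point — and the step I expect to be the main obstacle — is that $(k'_p)$ tends to infinity, since the minimum defining it may itself tend to $0$ when $(k_p)$ grows slower than linearly. I would prove $k'_p\to\infty$ by fixing $M>0$ and verifying that $k'_p\ge M$, i.e. $k_j\ge Mj/p$ for every $j\le p$, holds once $p$ is large. Let $J_M$ be the first index with $k_j\ge M$. For $J_M\le j\le p$ one has $k_j\ge M\ge Mj/p$ directly; for the finitely many $j<J_M$ the requirement reads $k_j\ge Mj/p$, which holds as soon as $MJ_M/p\le k_1$, i.e. for all $p$ large enough. Hence $k'_p\to\infty$, so $(k'_p)\in\mathfrak R$, and by the reduction of the first paragraph the inequality $\prod_{j=1}^{p+q}k'_j\le 2^{p+q}\prod_{j=1}^{p}k'_j\cdot\prod_{j=1}^{q}k'_j$ holds for all $p,q\in\ZZ_+$.
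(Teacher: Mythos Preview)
Your proof is correct and, despite the different presentation, coincides with the paper's. Your explicit formula $k'_p=p\min_{1\le j\le p}(k_j/j)$ is precisely what one gets by unwinding the paper's recursion $k'_1=k_1$, $k'_j=\min\{k_j,\tfrac{j}{j-1}k'_{j-1}\}$ (both give $k'_p/p=\min_{j\le p}k_j/j$), and the key step in either version is the inequality $k'_{p+j}\le\tfrac{p+j}{j}k'_j$ together with $\binom{p+q}{p}\le 2^{p+q}$. Your direct verification that $k'_p\to\infty$ is a clean alternative to the paper's contradiction argument.
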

\begin{proof} Define $k'_1=k_1$ and inductively $\ds k'_j=\min\left\{k_j,\frac{j}{j-1}k'_{j-1}\right\}$, for $j\geq 2$, $j\in\NN$. Obviously $k'_j\leq k_j$ and one easily checks that $(k'_j)$ is monotonically increasing. To prove that $k'_j$ tends to infinity, suppose the contrary. Then, because $(k'_j)$ is a monotonically increasing sequence of positive numbers, it follows that it is bounded by some $C>0$. Because $(k_j)\in\mathfrak{R}$, there exists $j_0$, such that, for all $j\geq j_0$, $j\in\NN$, $k_j\geq 2C$. So, for all $j\geq j_0+1$, $\ds k'_j=\frac{j}{j-1}k'_{j-1}$. We get that $\ds k'_j=\frac{j}{j_0}k'_{j_0}\rightarrow \infty$, when $j\longrightarrow \infty$, which is a contradiction. Hence $(k'_j)\in\mathfrak{R}$. Note that, for all $p,j\in\ZZ_+$, we have $\ds k'_{p+j}\leq \frac{p+j}{j}k'_{j}$. Hence $\ds \prod_{j=1}^{p+q}k'_j=\prod_{j=1}^{p}k'_j\cdot\prod_{j=1}^{q}k'_{p+j}\leq \prod_{j=1}^{p}k'_j\cdot\prod_{j=1}^{q}\frac{p+j}{j}k'_{j}=\frac{(p+q)!}{p!q!}\prod_{j=1}^{p}k'_j\cdot
\prod_{j=1}^{q}k'_{j}\leq 2^{p+q}\prod_{j=1}^{p}k'_j\cdot\prod_{j=1}^{q}k'_{j}$.
\end{proof}

\indent We will construct certain class of ultrapolynomials similar to those in \cite{Komatsu1}, (see (10.9)' in \cite{Komatsu1}), which will have the added beneficence of not having zeroes in a strip containing the real axis.\\
\indent Let $c>0$ be fixed. Let $k>0$, $l>0$ and $(k_p)\in\mathfrak{R}$, $(l_p)\in\mathfrak{R}$ be arbitrary but fixed. Choose $q\in\ZZ_+$ such that $\ds \frac{c\sqrt{d}}{l m_p}<\frac{1}{2}$, for all $p\in\NN$, $p\geq q$ in the $(M_p)$ case and $\ds \frac{c\sqrt{d}}{l_p m_p}<\frac{1}{2}$, for all $p\in\NN$, $p\geq q$ in the $\{M_p\}$ case. Consider the entire functions
\beq\label{u1}
P_l(w)=\prod_{j=q}^{\infty}\left(1+\frac{w^2}{l^2 m_j^2}\right),\,\, w\in\CC^d
\eeq
in the $(M_p)$ case, resp.
\beq\label{u2}
P_{l_p}(w)=\prod_{j=q}^{\infty}\left(1+\frac{w^2}{l_j^2 m_j^2}\right),\,\, w\in\CC^d
\eeq
in the $\{M_p\}$ case. It is easily checked that the entire function $P_l(w_1,0,...,0)$, resp. $P_{l_p}(w_1,0,...,0)$, of one variable satisfies the condition c) of proposition 4.6 of \cite{Komatsu1}. Hence, $P_l(w)$, resp. $P_{l_p}(w)$, satisfies the equivalent conditions a) and b) of proposition 4.5 of \cite{Komatsu1}. Hence, there exist $L>0$ and $C'>0$, resp. for every $L>0$ there exists $C'>0$, such that $|P_l(w)|\leq C'e^{M(L|w|)}$, resp. $|P_{l_p}(w)|\leq C'e^{M(L|w|)}$, for all $w\in\CC^d$ and $P_l(D)$, resp. $P_{l_p}(D)$, are ultradifferential operators of $(M_p)$, resp. $\{M_p\}$, type. It is easy to check that $P_l(w)$ and $P_{l_p}(w)$ don't have zeroes in $W=\RR^d+i\{v\in\RR^d||v_j|\leq c,\,j=1,...,d\}$. For $w=u+iv\in W$, $|u|\geq 2c\sqrt{d}$, we have $\ds \left|w^2\right|\geq \frac{|w|^2}{4}$ and $\ds \left|1+\frac{w^2}{l_j^2 m_j^2}\right|\geq 1$, for $j\geq q$. We estimate as follows
\beqs
|P_{l_p}(w)|&=&\left|\prod_{j=q}^{\infty}\left(1+\frac{w^2}{l_j^2 m_j^2}\right)\right|=\sup_p\prod_{j=q}^{p}\left|1+\frac{w^2}{l_j^2 m_j^2}\right|\geq\sup_p\prod_{j=q}^{p}\frac{\left|w^2\right|}{l_j^2 m_j^2}\geq\sup_p\prod_{j=q}^{p}\frac{|w|^2}{4l_j^2 m_j^2}\\
&=&\frac{\prod_{j=1}^{q-1} 4l_j^2}{|w|^{2q-2}}\left(\sup_p\frac{|w|^pM_{q-1}}{M_p\prod_{j=1}^p 2l_j}\right)^2
=C'_0\left(\frac{M_{q-1}\prod_{j=1}^{q-1} k_j}{|w|^{q-1}}\right)^2 e^{2N_{2l_p}(|w|)}\geq C'_0\frac{e^{N_{2l_p}(|w|)}}{e^{2N_{k_p}(|w|)}},
\eeqs
where we put $\ds C'_0=\prod_{j=1}^{q-1}\frac{4l_j^2}{k_j^2}$ and $l_p=l$ and $k_p=k$ in the $(M_p)$ case. For $w\in W$, because $P_l(w)$, resp. $P_{l_p}(w)$, doesn't have zeroes in $W$, we get that there exist $C_0>0$ such that
\beq\label{uu}
|P_l(w)|\geq C_0e^{-2M(|w|/k)}e^{M\left(|w|/(2l)\right)},\, \mbox{resp.}\, |P_{l_p}(w)|\geq C_0e^{-2N_{k_p}(|w|)}e^{N_{2l_p}(|w|)},\, w\in W.
\eeq
Now, by using Cauchy integral formula, we can estimate the derivatives of $1/P_l(x)$, resp. $1/P_{l_p}(\xi)$. We will introduce some notations to make the calculations less cumbersome. For $r>0$, denote by $B_r(a)$ the polydisc with center at $a$ and radii $r$, i.e. $\{z\in\CC^d||z_j-a_j|< r,\, j=1,2,...,d\}$ and by $T_r(a)$ the corresponding polytorus $\{z\in\CC^d||z_j-a_j|= r,\, j=1,2,...,d\}$. We will do it for the $\{M_p\}$ case, for the $(M_p)$ case it is similar. We already know that on $W$, $1/P_{l_p}(w)$ is analytic function ($P_{l_p}$ doesn't have zeroes in $W$). Hence
\beqs
\left|\partial^{\alpha}_w\frac{1}{P_{l_p}(x)}\right|\leq \frac{\alpha!}{r^{|\alpha|}}\cdot\left\|\frac{1}{P_{l_p}(z)}\right\|_{L^{\infty}(T_r(x))}
\leq \frac{\alpha!}{C_0r^{|\alpha|}}\cdot
\left\|\frac{e^{2N_{k_p}(|z|)}}{e^{N_{2l_p}(|z|)}}\right\|_{L^{\infty}(T_r(x))},
\eeqs
for arbitrary but fixed $r\leq c$ (so $\overline{B_{r}(x)}\subseteq W$). For $x\in\RR^d\backslash B_{2r\sqrt{d}}(0)$, there exists $j\in\{1,...,d\}$ such that $|x_j|\geq 2r\sqrt{d}$. Then, on $T_r(x)$, $|z|\geq |x|-|z-x|=|x|-r\sqrt{d}\geq |x|/2$, i.e. $e^{N_{2l_p}(|z|)}\geq e^{N_{2l_p}(|x|/2)}=e^{N_{4l_p}(|x|)}$. Moreover, for such $x$, we have
\beqs
e^{2N_{k_p}(|z|)}\leq e^{2N_{k_p}(|x|+r\sqrt{d})}\leq 4e^{2N_{k_p}(2r\sqrt{d})}e^{2N_{k_p}(2|x|)}=C_1e^{2N_{k_p}(2|x|)},
\eeqs
where in the last inequality we used that $e^{M(\lambda+\nu)}\leq 2e^{M(2\lambda)}e^{M(2\nu)}$, for $\lambda\geq0$, $\nu\geq 0$. So, we obtain $\ds \left|\partial^{\alpha}_w\frac{1}{P_{l_p}(x)}\right|\leq C\cdot\frac{\alpha!}{r^{|\alpha|}}\frac{e^{2N_{k_p}(2|x|)}}{e^{N_{4l_p}(|x|)}}$. For $x$ in $B_{2r\sqrt{d}}(0)$, $\ds\left\|e^{2N_{k_p}(|z|)}e^{-N_{2l_p}(|z|)}\right\|_{L^{\infty}(T_r(x))}$ is bounded, so we can conclude that the above inequality holds, possible with another constant $C$. Analogously, we can prove that, for the $(M_p)$ case, $\ds \left|\partial^{\alpha}_w\frac{1}{P_l(x)}\right|\leq C\cdot\frac{\alpha!}{r^{|\alpha|}}\frac{e^{2M\left(2|x|/k\right)}}{e^{M\left(|x|/(4l)\right)}}$. This is important, because, if $k>0$ is fixed, resp. $(k_p)\in\mathfrak{R}$ is fixed, then we can find $l>0$, resp. $(l_p)\in\mathfrak{R}$, such that $\ds e^{2M\left(2|x|/k\right)}e^{-M\left(|x|/(4l)\right)}\leq C''e^{-M\left(|x|/k\right)}$, resp. $e^{2N_{k_p}(2|x|)}e^{-N_{4l_p}(|x|)}\leq C''e^{-N_{k_p}(|x|)}$, for some $C''>0$. This inequality trivially follows from proposition 3.6 of \cite{Komatsu1} in the $(M_p)$ case. To prove the inequality in the $\{M_p\}$ case, first note that $e^{2N_{k_p}(2|x|)}e^{N_{k_p}(|x|)}\leq e^{3N_{k_p/2}(|x|)}$. By lemma \ref{psss}, there exists $(k'_p)\in \mathfrak{R}$ such that $k'_p\leq k_p/2$ and $\ds\prod_{j=1}^{p+q}k'_j\leq 2^{p+q}\prod_{j=1}^{p}k'_j\cdot\prod_{j=1}^{q}k'_j$, for all $p,q\in\ZZ_+$. So $\ds e^{3N_{k_p/2}(|x|)}\leq e^{3N_{k'_p}(|x|)}$. If we put $N_0=1$ and $\ds N_p=M_p\prod_{j=1}^{p}k'_j$, for $p\in\ZZ_+$, then, by the properties of $(k'_p)$, it follows that $N_p$ satisfies $(M.1)$, $(M.2)$ and $(M.3)'$ where the constant $H$ in $(M.2)$ for this sequence is equal to $2H$. Moreover, note that $N(\lambda)=N_{k'_p}(\lambda)$, for all $\lambda\geq 0$. We can now use proposition 3.6 of \cite{Komatsu1} for $N(|x|)$ (i.e. for $N_{k'_p}(|x|)$) and obtain $e^{3N_{k'_p}(|x|)}\leq c''e^{N_{k'_p}(4H^2|x|)}=c''e^{N_{k'_p/(4H^2)}(|x|)}$, for some $c''>0$. Now take $l_p$ such that $4l_p=k'_p/(4H^2)$, $p\in\ZZ_+$ and the desired inequality follows. So, we obtain
\beqs
\left|\partial^{\alpha}_x\frac{1}{P_l(x)}\right|\leq C\cdot\frac{\alpha!}{r^{|\alpha|}}e^{-M\left(|x|/k\right)},\, \mbox{resp.}\, \left|\partial^{\alpha}_x\frac{1}{P_{l_p}(x)}\right|\leq C\cdot\frac{\alpha!}{r^{|\alpha|}}e^{-N_{k_p}(|x|)},\, x\in\RR^d,\alpha\in\NN^d,
\eeqs
where $C$ depends on $k$ and $l$, resp. $(k_p)$ and $(l_p)$, and $M_p$; $r\leq c$ arbitrary but fixed. Moreover, from the above observation and (\ref{uu}), we obtain
\beq\label{uu1}
|P_l(w)|\geq \tilde{C}e^{M(|w|/k)},\, \mbox{resp.}\, |P_{l_p}(w)|\geq \tilde{C}e^{N_{k_p}(|w|)},\, w\in W,
\eeq
for some $\tilde{C}>0$.

\begin{lemma}\label{nnl}
let $g:[0,\infty)\longrightarrow[0,\infty)$ be an increasing function that satisfies the following estimate:\\
\indent for every $L>0$ there exists $C>0$ such that $g(\rho)\leq M(L\rho)+\ln C$.\\
Then there exists subordinate function $\epsilon(\rho)$ such that $g(\rho)\leq M(\epsilon(\rho))+\ln C'$, for some constant $C'>1$.
\end{lemma}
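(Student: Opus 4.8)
The plan is to exploit the hypothesis only through the countable subfamily of scalings $L=1/p$, $p\in\ZZ_+$, and to diagonalize it into a single sublinear scaling. First I would fix, for each $p\in\ZZ_+$, a constant $C_p\geq 1$ with $g(\rho)\leq M(\rho/p)+\ln C_p$ for all $\rho\geq 0$; replacing $C_p$ by $\max\{C_1,\dots,C_p\}$ I may assume $(C_p)$ is increasing. The essential analytic input is that the additive constants $\ln C_p$ can be absorbed into a mild dilation of the argument of $M$. Concretely, writing $\tilde M(t)=M(e^t)$, the function $\tilde M$ is convex (a supremum of affine functions of $t$) and, by the property recalled in the preliminaries that $M(\rho)$ grows faster than every power of $\ln\rho$, it is superlinear; a convex superlinear function satisfies $\tilde M(t+\ln 2)-\tilde M(t)\to\infty$, i.e. $M(2\sigma)-M(\sigma)\to\infty$ as $\sigma\to\infty$. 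Hence for each $p$ there is $\sigma_p$ with $M(2\sigma)\geq M(\sigma)+\ln C_p$ for $\sigma\geq\sigma_p$, and putting $\rho_p=p\sigma_p$ (arranging $\rho_p\uparrow\infty$) gives $g(\rho)\leq M(2\rho/p)$ for all $\rho\geq\rho_p$.

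With this in hand I would define $\epsilon$ through the generalized inverse of $M$, namely $\epsilon(\rho)=\inf\{\sigma\geq 0: M(\sigma)\geq g(\rho)\}$. Since $g$ is increasing, $\epsilon$ is increasing, and since $M$ is continuous with $M(\sigma)\to\infty$ one has $M(\epsilon(\rho))\geq g(\rho)$, so the desired inequality already holds with $C'=1$ for this raw $\epsilon$. Subordinacy is exactly where the previous step pays off: for $\rho\geq\rho_p$ the bound $g(\rho)\leq M(2\rho/p)$ forces $2\rho/p\in\{\sigma:M(\sigma)\geq g(\rho)\}$, whence $\epsilon(\rho)\leq 2\rho/p$ and therefore $\epsilon(\rho)/\rho\leq 2/p$; letting $p\to\infty$ yields $\epsilon(\rho)/\rho\to 0$.

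The remaining point, and the one where the additive constant $\ln C'$ in the statement is genuinely used, is regularity: the generalized inverse of the merely increasing (possibly discontinuous) $g$ need not be continuous, nor need $\epsilon(\rho)/\rho$ be monotone, whereas a subordinate function is required to be continuous with $\epsilon(\rho)/\rho$ decreasing to $0$. I would therefore replace $\epsilon$ by a continuous increasing majorant $\tilde\epsilon\geq\epsilon$ (obtained, e.g., by piecewise-linear interpolation on a sufficiently fine mesh, or by setting $\tilde\epsilon(\rho)=\rho\,\sup_{t\geq\rho}\epsilon(t)/t$ to force monotone decay of the quotient); since $M$ is increasing, $M(\tilde\epsilon(\rho))\geq M(\epsilon(\rho))\geq g(\rho)$ is preserved, while $\epsilon(\rho)\leq 2\rho/p$ keeps $\tilde\epsilon$ subordinate. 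The finitely many small-$\rho$ values and the normalization $\tilde\epsilon(0)=0$ are then accommodated by allowing $C'>1$. I expect the main obstacle to be precisely this interplay between the two requirements on $\epsilon$, continuity together with monotone, vanishing $\epsilon(\rho)/\rho$, and the domination $g\leq M(\epsilon)$; by contrast the analytic crux, that convexity plus superlinearity of $M$ turns the additive constants $\ln C_p$ into the harmless dilation factor $2$, is comparatively clean.
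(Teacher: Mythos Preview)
Your proposal is correct and shares the paper's core idea: construct $\epsilon$ essentially as $M^{-1}$ composed with (a majorant of) $g$, then check $\epsilon(\rho)/\rho\to 0$ from the hypothesis. The execution differs in two places.

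First, the paper handles regularity at the outset rather than at the end: it immediately replaces $g$ by a continuous strictly increasing majorant $f$ satisfying the same estimates, and then sets $\epsilon(\rho)=M^{-1}(f(\rho))$ on $[\rho_1,\infty)$, extended linearly to $[0,\rho_1]$ with $\epsilon(0)=0$. This yields continuity and monotonicity for free and makes the constant $C'$ appear only from that linear patch near the origin. Your route---invert first via the generalized inverse and then repair continuity/behaviour at $0$ afterwards---works but, as you yourself note, requires a somewhat fiddly second pass; the paper's order of operations avoids this entirely.

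Second, for $\epsilon(\rho)/\rho\to 0$ the paper argues by direct contradiction: if $\epsilon(\rho_j)\geq 2L\rho_j$ along a sequence, then $M(2L\rho_j)\leq f(\rho_j)\leq M(L\rho_j)+\ln C$, contradicting the growth of $M$. This is exactly your ``analytic crux'' $M(2\sigma)-M(\sigma)\to\infty$, but deployed once at the end rather than preprocessed into the family of inequalities $g(\rho)\leq M(2\rho/p)$ for $\rho\geq\rho_p$. Both arguments rest on the same fact; the paper's packaging is shorter.

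One small point: the paper (following Komatsu) only requires a subordinate function to be increasing, continuous, vanish at $0$, and satisfy $\epsilon(\rho)/\rho\to 0$; it does not demand that $\epsilon(\rho)/\rho$ be monotone decreasing, and indeed the paper's own proof does not verify that. So the extra effort you plan to spend forcing monotone decay of the quotient is unnecessary.
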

For the definition of subordinate function see \cite{Komatsu1}.
\begin{proof} If $g(\rho)$ is bounded then the claim of the lemma is trivial (we can take $C'$ large enough such that the inequality will hold for arbitrary subordinate function). Assume that $g$ is not bounded. We can easily find continuous strictly increasing function $f:[0,\infty)\longrightarrow[0,\infty)$ which majorizes $g$ such that for every $L>0$ there exists $C>0$ such that $f(\rho)\leq M(L\rho)+\ln C$. Hence, there exists $\rho_1>0$ such that $f(\rho)>0$ for $\rho\geq\rho_1$. There exists $\rho_0>0$ such that $M(\rho)=0$ for $\rho\leq\rho_0$ and $M(\rho)>0$ for $\rho>\rho_0$. Because $M(\rho)$ is continuous and strictly increasing on the interval $[\rho_0,\infty)$ and $\ds\lim_{\rho\rightarrow\infty}M(\rho)=\infty$, $M$ is bijection from $[\rho_0,\infty)$ to $[0,\infty)$ with continuous and strictly increasing inverse $M^{-1}:[0,\infty)\longrightarrow[\rho_0,\infty)$. Define $\epsilon(\rho)$ on $[\rho_1,\infty)$ in the following way $\epsilon(\rho)=M^{-1}(f(\rho))$ and define it linearly on $[0,\rho_1)$ such that it will be continuous on $[0,\infty)$ and $\epsilon(0)=0$. Then $\epsilon(\rho)$ is strictly increasing and continuous on $[0,\infty)$. Moreover, for $\rho\in[\rho_1,\infty)$, it satisfies $f(\rho)=M(\epsilon(\rho))$. Hence, there exists $C'>1$ such that $f(\rho)\leq M(\epsilon(\rho))+\ln C'$, for $\rho\geq 0$. It remains to prove that $\epsilon(\rho)/\rho\longrightarrow 0$ when $\rho\longrightarrow\infty$. Assume the contrary. Then, there exist $L>0$ and a strictly increasing sequence $\rho_j$ which tends to infinity when $j\longrightarrow\infty$, such that $\epsilon(\rho_j)\geq2L\rho_j$, i.e. $f(\rho_j)\geq M(2L\rho_j)$. For this $L$, by the condition for $f$, choose $C>1$ such that $f(\rho)\leq M(L\rho)+\ln C$. Then we have $M(2L\rho_j)\leq M(L\rho_j)+\ln C$, which contradicts the fact that $e^{M(\rho)}$ increases faster then $\rho^p$ for any $p$. One can obtain this contradiction by using equality (3.11) of \cite{Komatsu1}.
\end{proof}

\begin{theorem}\label{t2}
Let $B$ be a connected open set in $\RR^d_{\xi}$ and $f$ an analytic function on $B+i\RR^d_{\eta}$. Let $f$ satisfies the condition:\\
\indent for every compact subset $K$ of $B$ there exist $C>0$ and $k>0$, resp. for every $k>0$ there exists $C>0$, such that
\beq\label{n1}
|f(\xi+i\eta)|\leq C e^{M(k|\eta|)},\, \forall\xi\in K, \forall\eta\in\RR^d.
\eeq
Then, there exists $S\in\DD'^{*}(\RR^d_x)$ such that $e^{-x\xi}S(x)\in\SSS'^{*}(\RR^d_x)$, for all $\xi\in B$ and
\beq\label{n4}
\mathcal{L}(S)(\xi+i\eta)=\mathcal{F}_{x\rightarrow\eta}\left(e^{-x\xi}S(x)\right)(\xi+i\eta)=f(\xi+i\eta),\,\, \xi\in B,\, \eta\in\RR^d.
\eeq
\end{theorem}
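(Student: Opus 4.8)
Theorem \ref{t2} is the converse of Theorem \ref{t1}: starting from an analytic function $f$ on $B+i\RR^d_\eta$ satisfying the exponential growth bound \eqref{n1}, we must construct an ultradistribution $S$ whose Laplace transform is $f$.

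The plan is to invert the construction of Theorem \ref{t1}. The growth estimate \eqref{n1} is exactly calibrated so that, for each fixed $\xi\in B$, the continuous function $\eta\mapsto f(\xi+i\eta)$ is a tempered ultradistribution, i.e. an element of $\SSS'^*(\RR^d_\eta)$: in the Beurling $(M_p)$ case the single $k$ from \eqref{n1} lets it pair with the arbitrarily fast decaying test functions of $\SSS^{(M_p)}$, while in the Roumieu $\{M_p\}$ case the clause ``for every $k$'' is what guarantees a finite pairing with each fixed element of $\SSS^{\{M_p\}}$. I would therefore set $g_\xi:=\mathcal{F}^{-1}_{\eta\rightarrow x}\bigl(f(\xi+i\eta)\bigr)\in\SSS'^*(\RR^d_x)$ and define $S(x):=e^{x\xi}g_\xi(x)$, reading $e^{x\xi}$ as an element of $\EE^*$ acting as a multiplier on $\DD'^*$. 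The theorem then reduces to two claims: (i) $S$ is independent of the chosen $\xi$, and (ii) $\mathcal{L}(S)=f$.

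To make this rigorous and to obtain a concrete representation, I would regularize with the ultrapolynomials $P_l$, resp. $P_{l_p}$, from \eqref{u1}--\eqref{u2}. Choosing their parameters so that the lower bound \eqref{uu1} dominates the growth \eqref{n1} along $\RR^d$, the quotient $f(\xi+i\eta)/P(\eta)$ becomes a bounded, continuous, integrable function of $\eta$; hence $G_\xi:=\mathcal{F}^{-1}(f(\xi+i\cdot)/P)$ is a bounded continuous function and $g_\xi=P(D)G_\xi$, which both confirms $g_\xi\in\SSS'^*$ and supplies the fine structural form matching \eqref{25}. In the Beurling case the single parameter $k$ from \eqref{n1} suffices. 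In the Roumieu case I would first apply Lemma \ref{nnl} to replace the family of bounds ``for every $k$'' by one estimate $M(\epsilon(|\eta|))$ with a subordinate function $\epsilon$, and then pick the sequence $(l_p)$ defining $P_{l_p}$ so that \eqref{uu1} beats this growth uniformly.

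The crux is claim (i). Differentiating the $\SSS'^*$-valued map $\xi\mapsto f(\xi+i\cdot)$ — the Cauchy estimates applied to \eqref{n1} over a slightly enlarged compact provide the bounds on $\partial_{\xi_j}f$ needed for smooth, indeed analytic, dependence — and using that $\mathcal{F}$ is an automorphism of $\SSS'^*$, the Cauchy--Riemann relation $\partial_{\xi_j}f=-i\partial_{\eta_j}f$ together with the integration-by-parts identity $\mathcal{F}^{-1}(\partial_{\eta_j}h)=-ix_j\mathcal{F}^{-1}(h)$ yields $\partial_{\xi_j}g_\xi=-x_j g_\xi$. Since $\partial_{\xi_j}e^{x\xi}=x_j e^{x\xi}$, this gives $\partial_{\xi_j}\bigl(e^{x\xi}g_\xi\bigr)=x_j e^{x\xi}g_\xi+e^{x\xi}(-x_j g_\xi)=0$, so on the connected set $B$ the element $S=e^{x\xi}g_\xi\in\DD'^*$ does not depend on $\xi$. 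I expect this step — justifying differentiation under the distributional pairing and carrying the Cauchy--Riemann manipulation out uniformly in $\xi$, particularly in the Roumieu case where Lemma \ref{nnl} is essential — to be the main technical obstacle.

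Finally, once $S$ is known to be $\xi$-independent, the identity $e^{-x\xi}S(x)=g_\xi\in\SSS'^*$ holds for every $\xi\in B$, which is the required membership, and applying $\mathcal{F}_{x\rightarrow\eta}$ and using $\mathcal{F}g_\xi=f(\xi+i\cdot)$ gives $\mathcal{L}(S)(\xi+i\eta)=\mathcal{F}_{x\rightarrow\eta}(e^{-x\xi}S)=f(\xi+i\eta)$, which is \eqref{n4}.
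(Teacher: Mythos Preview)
Your proposal is correct and shares the paper's overall scaffolding --- define $g_\xi=\mathcal F^{-1}(f(\xi+i\cdot))$, set $S_\xi=e^{x\xi}g_\xi$, regularize with the ultrapolynomials of \eqref{u1}--\eqref{u2}, and invoke Lemma~\ref{nnl} in the Roumieu case --- but it diverges from the paper at the crucial step of showing that $S_\xi$ is independent of~$\xi$. You argue differentially: the Cauchy--Riemann relation $\partial_{\xi_j}f=-i\partial_{\eta_j}f$ gives $\partial_{\xi_j}g_\xi=-x_jg_\xi$, and then the Leibniz rule kills $\partial_{\xi_j}(e^{x\xi}g_\xi)$. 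The paper instead argues by contour deformation: it introduces the \emph{shifted} ultrapolynomial $P_\xi(\eta)=P(\eta-i\xi)$, uses the conjugation identity $e^{x\xi}P_\xi(D)=P(D)\,e^{x\xi}$ to rewrite $S_\xi$ as $P(D)$ applied to the absolutely convergent integral $(2\pi)^{-d}\int_{\RR^d-i\xi}f(iw)e^{iwx}/P(w)\,dw$, and then the Cauchy--Poincar\'e theorem shows this contour integral does not depend on the height~$\xi$. Your route is more conceptual and avoids the shift trick, but it obliges you to justify differentiation of the $\SSS'^*$-valued map $\xi\mapsto g_\xi$ and of the $\DD'^*$-valued map $\xi\mapsto e^{x\xi}g_\xi$ (which you rightly flag as the main technical point); the paper's route is more explicit, sidesteps those functional-analytic issues entirely, and delivers at once a structural formula $S=P(D)(\text{continuous function})$ with a single $\xi$-independent ultradifferential operator $P(D)$.
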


\begin{proof}
Because of (\ref{n1}), for every fixed $\xi\in B$, $f_{\xi}=f(\xi+i\eta)\in\SSS'^{*}(\RR^d_{\eta})$. Put $T_{\xi}(x)=\mathcal{F}^{-1}_{\eta\rightarrow x}\left(f_{\xi}(\eta)\right)(x)\in\SSS'^{*}(\RR^d_x)$ and $S_{\xi}(x)=e^{x\xi}T_{\xi}(x)\in\DD'^{*}(\RR^d_x)$. We will show that $S_{\xi}$ does not depend on $\xi\in B$. Let $U$ be an arbitrary, but fixed, bounded connected open subset of $B$, such that $K=\overline{U}\subset\subset B$.\\
\indent Let $c>2$ be such that $|\xi_j|\leq c/2$, for $\xi=(\xi_1,...,\xi_d)\in K$. In the $(M_p)$ case, choose $s>0$ such that $\ds\int_{\RR^d}e^{M(k|\eta|)}e^{-M(\frac{s}{2}|\eta|)}d\eta<\infty$ and $e^{2M(k|\eta|)}\leq \tilde{c} e^{M(\frac{s}{2}|\eta|)}$, for some constant $\tilde{c}>0$. For the $\{M_p\}$ case, by the conditions in the theorem, for every $k>0$ there exists $C>0$, such that $\ln_+|f(\xi+i\eta)|\leq M(k|\eta|)+\ln C$ for all $\xi\in K$ and $\eta\in\RR^d$. The same estimate holds for the nonnegative increasing function
\beqs
g(\rho)=\sup_{|\eta|\leq\rho}\sup_{\xi\in K}\ln_+|f(\xi+i\eta)|.
\eeqs
If we use lemma \ref{nnl} for this function we get that there exists subordinate function $\epsilon(\rho)$ and a constant $C>1$ such that $g(\rho)\leq M(\epsilon(\rho))+\ln C$. From this we have that $\ln_+|f(\xi+i\eta)|\leq g(|\eta|)\leq M(\epsilon(|\eta|))+\ln C$, i.e.
\beq\label{n2}
|f(\xi+i\eta)|\leq Ce^{M(\epsilon(|\eta|))},\, \forall\xi\in K, \forall\eta\in\RR^d,
\eeq
for some $C>1$. By lemma 3.12 of \cite{Komatsu1}, there exists another sequence $\tilde{N}_p$, which satisfies $(M.1)$, such that $\tilde{N}(\rho)\geq M(\epsilon(\rho))$ and $k'_p=\tilde{n}_p/m_p\longrightarrow\infty$ when $p\longrightarrow\infty$. Take $(k_p)\in\mathfrak{R}$ such that $k_p\leq k'_p$, $p\in\ZZ_+$. Then
\beqs
e^{N_{k_p}(\rho)}=\sup_p\frac{\rho^p}{M_p\prod_{j=1}^p k_j}\geq \sup_p\frac{\rho^p}{M_p\prod_{j=1}^p k'_j}=e^{\tilde{N}(\rho)}\geq e^{M(\epsilon(\rho))}.
\eeqs
Hence, from (\ref{n2}), it follows that $|f(\xi+i\eta)|\leq Ce^{N_{k_p}(|\eta|)}$, for all $\xi\in K$ and $\eta\in\RR^d$. Choose $(s_p)\in\mathfrak{R}$ such that $\ds\int_{\RR^d}e^{N_{k_p}(|\eta|)}e^{-N_{2s_p}(|\eta|)}d\eta<\infty$ and $e^{2N_{k_p}(|\eta|)}\leq \tilde{c} e^{N_{2s_p}(|\eta|)}$, for some $\tilde{c}>0$.\\
\indent Now, for the chosen $c$ and $s$, resp. $(s_p)$, by the discussion before the theorem, we can find $l>0$, resp. $(l_p)\in\mathfrak{R}$, and entire functions $P_l(w)$ as in (\ref{u1}), resp. $P_{l_p}(w)$ as in (\ref{u2}), such that they don't have zeroes in $W=\RR^d+i\{v\in\RR^d||v_j|\leq c,\,j=1,...,d\}$ and the following estimates hold
\beqs
\left|\partial^{\alpha}_x\frac{1}{P_l(x)}\right|\leq C\cdot\frac{\alpha!}{r^{|\alpha|}}e^{-M\left(s|x|\right)},\, \mbox{resp.}\, \left|\partial^{\alpha}_x\frac{1}{P_{l_p}(x)}\right|\leq C\cdot\frac{\alpha!}{r^{|\alpha|}}e^{-N_{s_p}(|x|)},\, x\in\RR^d,\alpha\in\NN^d,
\eeqs
where $C$ depends on $s$ and $l$, resp. $(s_p)$ and $(l_p)$, and $M_p$; $r\leq c$ is arbitrary but fixed. For shorter notation, we will denote $P_l(w)$ and $P_{l_p}(w)$ by $P(w)$ in both cases. Define the entire functions $\ds P_{\xi}(w)=P(w-i\xi)=\prod_{j=q}^{\infty}\left(1+\frac{(w-i\xi)^2}{l^2 m_j^2}\right)$ in the $(M_p)$ case, resp. $\ds P_{\xi}(w)=P(w-i\xi)=\prod_{j=q}^{\infty}\left(1+\frac{(w-i\xi)^2}{l_j^2 m_j^2}\right)$ in the $\{M_p\}$ case. As we noted in the construction of the entire functions $P(w)$ (the discussion before the theorem), $P(w)$ satisfies the equivalent conditions a) and b) of proposition 4.5 of \cite{Komatsu1}. Hence, there exist $L>0$ and $C'>0$, resp. for every $L>0$ there exists $C'>0$, such that $|P(w)|\leq C'e^{M(L|w|)}$, $w\in\CC^d$ and $P(D)$ are ultradifferential operators of $(M_p)$, resp. $\{M_p\}$, type. So, we obtain
\beqs
|P_{\xi}(w)|=|P(w-i\xi)|\leq C'e^{M(L|w-i\xi|)}\leq C''e^{M(2L|w|)},\, w\in\CC^d,
\eeqs
because $\xi=(\xi_1,...,\xi_d)$ is such that $|\xi_j|\leq c/2$, for $j=1,...,d$. Hence, by proposition 4.5 of \cite{Komatsu1}, $P_{\xi}(D)$ is an ultradifferential operator of class $(M_p)$, resp. of class $\{M_p\}$, for every $\xi=(\xi_1,...,\xi_d)$ such that $|\xi_j|\leq c/2$, $j=1,...,d$. Moreover, by the properties of $P(w)$, it follows that $P_{\xi}(w)$ is an entire function that doesn't have zeroes in $\RR^d+i\{v\in\RR^d||v_j|\leq c/2,\,j=1,...,d\}$ for all $\xi\in K$. So, by using the Cauchy integral formula to estimate the derivatives, one obtains that $P_{\xi}(\eta)$ and $1/P_{\xi}(\eta)$ are multipliers for $\SSS'^{*}(\RR^d_{\eta})$. Also, by (\ref{uu1}), we have $|P_{\xi}(\eta)|=|P(\eta-i\xi)|\geq \tilde{C}e^{M(s|\eta-i\xi|)}\geq \tilde{C}'e^{M(\frac{s}{2}|\eta|)}$, for all $\xi\in K$ and $\eta\in\RR^d$ in the $(M_p)$ case and similarly, $|P_{\xi}(\eta)|=|P(\eta-i\xi)|\geq \tilde{C}e^{N_{s_p}(|\eta-i\xi|)}\geq \tilde{C}'e^{N_{2s_p}(|\eta|)}$, for all $\xi\in K$ and $\eta\in\RR^d$, in the $\{M_p\}$ case. For $\xi\in B$, put $f_{\xi}(\eta)=f(\xi+i\eta)$. Then $f_{\xi}(\eta)/P_{\xi}(\eta)\in L^1\left(\RR^d_{\eta}\right)\cap \EE^*\left(\RR^d_{\eta}\right)$, for all $\xi\in K$. Observe that
\beqs
e^{x\xi}\mathcal{F}^{-1}_{\eta\rightarrow x}\left(f_{\xi}(\eta)\right)(x)=e^{x\xi}\mathcal{F}^{-1}_{\eta\rightarrow x}\left(\frac{f_{\xi}(\eta)P_{\xi}(\eta)}{P_{\xi}(\eta)}\right)(x)=
e^{x\xi}P_{\xi}(D_x)\left(\mathcal{F}^{-1}_{\eta\rightarrow x}\left(\frac{f_{\xi}(\eta)}{P_{\xi}(\eta)}\right)(x)\right),
\eeqs
i.e.
\beq\label{n3}
S_{\xi}(x)=e^{x\xi}P_{\xi}(D_x)\left(\mathcal{F}^{-1}_{\eta\rightarrow x}\left(\frac{f_{\xi}(\eta)}{P_{\xi}(\eta)}\right)(x)\right).
\eeq
Let $\ds P(w)=\sum_{\alpha}c_{\alpha}w^{\alpha}$. For simpler notation, put $R(\eta)=f_{\xi}(\eta)/P_{\xi}(\eta)$ and calculate as follows
\beqs
P(D_x)\left(e^{x\xi}\mathcal{F}^{-1}_{\eta\rightarrow x}(R)(x)\right)&=&\sum_{\alpha} c_{\alpha}
\sum_{\beta\leq\alpha}{\alpha\choose\beta}(-i\xi)^{\beta}e^{x\xi}D^{\alpha-\beta}_x
\mathcal{F}^{-1}_{\eta\rightarrow x}(R)(x)\\
&=&e^{x\xi}\sum_{\alpha} c_{\alpha}
\sum_{\beta\leq\alpha}{\alpha\choose\beta}(-i\xi)^{\beta}D^{\alpha-\beta}_x
\mathcal{F}^{-1}_{\eta\rightarrow x}(R)(x).
\eeqs
Note that\\
$\ds\sum_{\alpha} c_{\alpha}
\sum_{\beta\leq\alpha}{\alpha\choose\beta}(-i\xi)^{\beta}D^{\alpha-\beta}_x
\mathcal{F}^{-1}_{\eta\rightarrow x}(R)(x)$
\beqs
&=&\mathcal{F}^{-1}_{\eta\rightarrow x}\left(\sum_{\alpha} c_{\alpha}
\sum_{\beta\leq\alpha}{\alpha\choose\beta}(-i\xi)^{\beta}\eta^{\alpha-\beta}R(\eta)\right)(x)
=\mathcal{F}^{-1}_{\eta\rightarrow x}\left(\sum_{\alpha} c_{\alpha}(\eta-i\xi)^{\alpha}R(\eta)\right)(x)\\
&=&\mathcal{F}^{-1}_{\eta\rightarrow x}\left(P(\eta-i\xi)R(\eta)\right)(x)=\mathcal{F}^{-1}_{\eta\rightarrow x}\left(P_{\xi}(\eta)R(\eta)\right)(x)=P_{\xi}(D_x)\mathcal{F}^{-1}_{\eta\rightarrow x}(R)(x).
\eeqs
From this and (\ref{n3}), we get $\ds S_{\xi}(x)=P(D_x)\left(e^{x\xi}\mathcal{F}^{-1}_{\eta\rightarrow x}\left(\frac{f_{\xi}(\eta)}{P_{\xi}(\eta)}\right)(x)\right)$. Now, for $w=\eta-i\xi$, we have
\beqs
e^{x\xi}\mathcal{F}^{-1}_{\eta\rightarrow x}\left(\frac{f_{\xi}(\eta)}{P_{\xi}(\eta)}\right)(x)
=\frac{1}{(2\pi)^d}\int_{\RR^d}\frac{f(\xi+i\eta)e^{(\xi+i\eta)x}}{P(\eta-i\xi)}d\eta
=\frac{1}{(2\pi)^d}\int_{\RR^d-i\xi}\frac{f(iw)e^{iwx}}{P(w)}dw.
\eeqs
The function $\ds\frac{f(iw)e^{iwx}}{P(w)}$ is analytic for $iw\in U+i\RR^d$, i.e. $w\in\RR^d-iU$ (because $P(w)$ is analytic in the last set and doesn't have zeroes there). Using the growth estimates for $f$ and $P$, from the theorem of Cauchy-Poincar\'e, it follows that the last integral doesn't depend on $\xi\in U$. From this and the arbitrariness of $U$ it follows that $S_{\xi}(x)$ doesn't depend on $\xi\in B$. We will denote this by $S(x)$. Now, by the observations in the beginning, it follows that $\mathcal{F}_{x\rightarrow\eta}\left(e^{-x\xi}S(x)\right)=f_{\xi}$ as ultradistributions in $\eta$ for every fixed $\xi\in B$. By theorem \ref{t1}, it follows that $\mathcal{F}_{x\rightarrow\eta}\left(e^{-x\xi}S(x)\right)$ is analytic function for $\zeta=\xi+i\eta\in B+i\RR^d$, hence the equality (\ref{n4}) holds pointwise.
\end{proof}

\begin{remark}
If $f$ is an analytic function on $O=B+i\RR^d_{\eta}$ and satisfies the conditions of the previous theorem then, by this theorem and theorem \ref{t1}, it follows that $f$ is analytic on $\mathrm{ch\,}B+i\RR^d_{\eta}$ and satisfies the estimates (\ref{3}) for every $K\subset\subset \mathrm{ch\,}B$.
\end{remark}



\end{document}